\pgfplotsset{compat=1.17}
\renewcommand{\Gamma}{\varGamma}
\renewcommand{\epsilon}{\varepsilon}
\DeclareMathOperator{\Cir}{Cir}
\renewcommand{\leq}{\leqslant}
\renewcommand{\geq}{\geqslant}
\newcolumntype{C}{>{\centering\arraybackslash}X}
\newcommand{\R}{\mathbb{R}}
\newcommand{\Z}{\mathbb{Z}}
\newcommand{\C}{\mathbb{C}}
\newcommand{\X}{\mathbb{X}}
\newcommand{\vt}{\mathrm{vert}}
\newcommand{\conv}{\mathrm{conv}}
\newcommand{\Fix}{\mathrm{Fix}}
\newcommand{\Span}{\mathrm{Span}}
\newcommand{\core}{\mathrm{core}}
\newcommand{\PCir }{\mathrm{PCir }}
\newcommand{\cequation}[1]{%
  \multispan{2}%
  \hfill$\displaystyle{#1}$\hfill
  \ignorespaces
}
\newcommand{\lequation}[1]{%
  \multispan{2}%
  $\displaystyle{#1}$\hfill
  \ignorespaces
}
\xpatchcmd{\algorithmic}{\itemsep\z@}{\itemsep=0.5ex plus2pt}{}{}
\newcommand{\OPT}{\operatorname{OPT}_{\infty}}
\begin{document}

\title*{Outer approximations of core points for integer programming}
%
%
\author{Naghmeh Shahverdi \and
  Seyyedemahsa Banihashemi
  \and\\
  David Bremner \orcidID{0000-0003-0272-585X}}
\institute{Naghmeh Shahverdi (\emph{primary author}) \at Psychiatry and Behavioral Sciences, Stanford University, \email{naghmehshahverdi2@gmail.com} \and
  Seyyedemahsa Banihashemi \at Faculty of Computer Science, University of New Brunswick, \email{masha.banihashemi@unb.ca} \and
  David Bremner, Faculty of Computer Science, University of New Brunswick, \email{bremner@unb.ca}
}

%
%
\maketitle  
%
%
\abstract{
For several decades the dominant techniques for integer linear programming have been branching and cutting planes. Recently, several authors have developed core point methods for solving symmetric  integer linear programs (ILPs). An integer point is called a core point  if its orbit polytope is lattice-free. It has been shown that for symmetric ILPs, optimizing over the set of core points gives the same answer as considering the entire space. Existing core point techniques rely on the number of core points (or equivalence classes) being finite, which requires special symmetry groups.  In this paper we develop some new methods for solving symmetric ILPs --- based on outer approximations of core points --- that do not depend on finiteness but are more efficient if the group has large disjoint cycles in its set of generators.
}
\section{Introduction}

Formulation symmetries occur in practice when relabellings yield equivalent problem structure; this causes repeated work for branching solvers, and state of the art commercial and research solvers make efforts to break symmetries~\cite{PR2019}. 
Let $ G\leq S_{n} $ be a permutation group acting on $ \R^{n} $ by permuting coordinates. For any integer point $ z\in \Z^{n} $,  the \emph{orbit polytope} of $z$ is the convex hull of the $G$-orbit of $z$.  If the vertices of  an orbit  polytope are the only integer points in the polytope  we call it \emph{lattice-free} and call $z$ a \emph{core point}.  Instead of seeing symmetry as a problem, core point techniques seek to exploit it to solve  integer linear programs (ILPs) faster. In the most direct approach, when the number of core points  is finite (which only holds for certain special groups),  core points are enumerated and tested individually~\cite{BHJ2013}.  It should be noted that core point techniques are not useful for binary problems since every $\{0,1\}$-point is a core point; \cite{HP2019} considers an alternative approach based on lexicographical order.

Computation of symmetries  in the MIPLIB 2010 and 2017 instances has been done in \cite{miplib} and this study shows that  many instances are affected by symmetry. Symmetric Integer Linear Programming appears in many problems such as scheduling on identical machines and code construction. For solving these problems, Artificial Intelligence approaches have been investigated in \cite{AP1,AP2}. 

A (not necessarily polyhedral)  outer approximation is a set of constraints that is feasible for all of the points in the set one wishes to approximate. A well known example of an outer approximation is an ILP, where the (initial) linear constraints define an outer approximation of the feasible integer points. Outer approximations lead naturally to a hybrid approach where synthesized constraints are added to an existing formulation and then solved with a traditional solver. Outer approximations are implicit in previous results bounding the distance of core points to certain linear subspaces (see e.g.\ Theorem 3.24 in  \cite{2}). The distance bounds do not themselves seem to be tight enough to provide a practical improvement for solving ILPs. In this paper we develop some new constraints for problems with  formulation symmetries. While these constraints are nonlinear and non-convex, initial experiments with nonlinear solvers seem promising.

In  Section 2 we give some basic definitions. In  Section  3 we consider integer linear programs with  cyclic symmetry groups and provide some new constraints to determine  outer approximations of their core points. We also provide an algorithm that uses these constraints to solve an ILP.  In Section 4 we  generalize this algorithm for ILPs where only  some of their  variables have cyclic symmetry. In Section 5 we generalize the algorithm for direct products of cyclic groups. In Section 6  we classify permutation groups based on their generators and explain  how the algorithms of the previous sections can be applied to ILPs
having arbitrary permutation groups as symmetry groups.  Finally in Section 7 we use our methods to solve  some hard symmetric integer linear feasibility problems.  

\section{Basic Definitions}

Symmetries of geometric objects (e.g. polyhedra and integer lattices) in
integer linear programming can be viewed as the action of some
underlying group.

\begin{definition}
If $ G $ is a group and $ X $ is a set, then a (left) \emph{group action} $  \phi$ of $ G $ on $ X $ is a function 
\begin{align*}
\phi : G \times X & \rightarrow X ,\\
 (g,x) & \rightarrow gx,
\end{align*}
that satisfies the following two axioms.
\begin{enumerate}
\item ex = x for all $ x\in X $, where  $e \in G$ is the identity element.
\item  \emph{(}gh\emph{)}x = g\emph{(}hx\emph{)} for all $g, h \in G$ and all $x \in X$. 
    \end{enumerate}
\end{definition}

In this paper we assume that all groups considered are permutation
groups, and that they act on a family $\X^n$ of $n$-tuples (e.g.\
$\C^n$, $\R^n$, or $\Z^n$ in the usual coordinates) as follows.
\begin{remark}\label{2}
The permutation group $ G \leq S_{n} $ acts on $ \X^{n} $ by  permuting coordinates $\{\,0, \ldots n-1\,\}$: for $ g \in G $ and $ x=(x_{0},\ldots,x_{n-1}) \in \X^{n} $ 
\begin{equation*}
\phi_{g}(x)=(x_{g^{-1}(0)},x_{g^{-1}(1)},\ldots,x_{g^{-1}(n-1)}).
\end{equation*}
\end{remark}
There are certainly more general notions of symmetry possible, but
permuting coordinates is the most widely studied class of symmetries in integer linear
programming~\cite{PR2019}.

The techniques in this paper rely on restricting the search for
feasible integral points to certain special integral points called
\emph{core points}. Core points are defined relative to a given group
as follows.

\begin{definition}[Core Points]
  \leavevmode
  \begin{enumerate}
  \item Let $ P \subset \mathbb{R}^n $ be a convex polytope with integral vertices. We call $ P $ \emph{lattice-free}
if $ P \cap \mathbb{Z}^n= \vt (P) $ where $ \vt (P) $ is the set of vertices of~$P$.
\item Let $ G \leq GL_{n}(\mathbb{R})  $ be a finite group and let $ G_{z} $ be
the G-orbit of some point $ z \in \mathbb{\R}^n$. We call the convex hull of this orbit an \emph{orbit polytope} and denote it by $\conv(G_{z})$.
\item Let $ G \leq GL_{n}(\mathbb{Z}) $ be a finite group of unimodular matrices. A point
$ z \in \mathbb{Z}^n $ is called a \emph{core point} for $ G $ if and only if the orbit polytope $\conv(G_{z})$ is lattice-free.
  \end{enumerate}
\end{definition}
 
Let $ C_{n}=\langle \sigma \rangle $ denote the \emph{cyclic group}
generated by $ \sigma $, a cyclic permutation of coordinates. In other
words, $ \sigma: \mathbb{R}^n \rightarrow \mathbb{R}^n $ is given by
\begin{equation*}
  \sigma(c_{0},c_{1},\ldots,c_{n-1})=(c_{n-1},c_{0},\ldots,c_{n-2})
\end{equation*}
We represent points in $ \mathbb{R}^n $ by column vectors but for convenience, we write such vectors here in a transpose way.  The map $ \sigma $ is easily iterated:
\begin{equation*}
\sigma^{k}(c_{0},c_{1},\ldots,c_{n-1})=(c_{n-k},c_{n-k+1},\ldots,c_{n-1},c_{0},\ldots,c_{n-k-1})
\end{equation*}

Consider a cyclic group $ C_{4}=\langle (1,2,3,4)\rangle $. The orbit
of $ x=(2,0,4,1) $ consists of the four vectors $ x_{1}=(2,0,4,1)$,
$ x_{2}=(1,2,0,4)$, $ x_{3}=(4,1,2,0)$, $ x_{4}=(0,4,1,2)$. The orbit
polytope of $ x $ is not lattice-free because
 \begin{equation*}
   (1,2,1,3)= \frac{1}{5}x_{1}+\frac{3}{5}x_{2}+0x_{3}+\frac{1}{5}x_{4},
\end{equation*}
 so that $ x $ is not a core point.

\begin{definition}
Let $ G $ act on $\C^n$. The subset of $\C^n$ preserved by all elements of $G$ is called the \emph{fixed space}:
\begin{equation*}
\Fix(G) :=\{x\in \C^n \; \;  \vert  \; \;   gx=x, \forall g\in G\}.
\end{equation*}
We denote  by $ \Fix_{\Z}(G) $ (resp.\ $\Fix_{\R}(G)$), the intersection $ \Fix(G) \cap \Z^{n}$ (resp.\ $ \Fix(G) \cap \R^{n}$).
\end{definition}

We use $\mathbf{1}^k$ to denote the $k$-dimensional vector of all ones
(or just $\mathbf{1}$ where the dimension is clear from context).  It
is easy to see that $\mathbf{1}^n$ is contained in $\Fix_{\Z}(G)$ for  any group $G$ acting on $\R^n$ by permuting coordinates. If $G$ is
further transitive, then $\Fix_{\R}(G) = \Span(\mathbf{1})$.

\begin{definition}
We define the \emph{$k$-th layer} to be the set
\begin{equation*}
  \Z_{(k)}^{n}:=  \lbrace  z\in \mathbb{Z}^n \; \; \vert \; \;  \langle z,\mathbf{1}\rangle=k \rbrace.
\end{equation*}
\end{definition}

Note that the set $ \Z_{(k)}^{n} $ is $G$-invariant because $ G $ acts by permuting coordinates.
Consider the cyclic group $C_{4}= \langle (1,2,3,4)\rangle $. Let $X=\Z_{(2)}^4 \cap \{0,1\}^4$ be the  set of $\{0,1\}$-points in $ \R^{4} $ in layer two, i.e., each point has two $1$s and two $0$s, and suppose the action is the same as in Remark~\ref{2}. This action has two orbits:
\begin{align*}
 O_{G}((1,1,0,0)) &=\lbrace\, (1,1,0,0),(0,1,1,0),(0,0,1,1),(1,0,0,1)\,\rbrace\,, \\
O_{G}((1,0,1,0)) &=\lbrace\, (1,0,1,0),(0,1,0,1)\, \rbrace\,.
\end{align*}
The fixed space $\Fix_{\Z}(C_{4})$ contains only two $0,1$ points, namely $ (1,1,1,1) $ and $(0,0,0,0)$, and neither is in $X$.

In the remainder of the paper we will need several different notions of equivalence for integral points.
Potentially larger equivalence classes (based on normalizers) of core points are studied in \cite{LS2018}.
\begin{definition}[Equivalence relations]
  \label{equiv}
\leavevmode
\begin{enumerate}
\item Two points $ x,y \in \mathbb{Z}^n $ are called \emph{equivalent}  if there exists  $ g \in G $ such that $ x=gy $. It follows from the group axioms that this is an  equivalence relation.
\item Two points $ x,y \in \Z^{n} $ are called \emph{isomorphic} if there exists $ g \in G $ such that $ x-gy \in \Fix_{\Z}(G) $. This is an equivalence relation because $\Fix_{\Z}(G)$ is a lattice.
\item Two integer points $ z_{1} $ and $ z_{2} $ in $ \Z^{n} $ are called \emph{co-projective} if there exists an integer $ k\in \Z $ such that $ z_{1}=z_{2}+k\mathbf{1}$. Equivalently, if the group is transitive, $z_2$ is a translation of $ z_{1} $ through the fixed space.
\end{enumerate}
\end{definition}

Each of the equivalence relations in Definition~\ref{equiv} has
equivalence classes that are either entirely core-points or entirely
non-core integer points. Along with the observation that
$\{0,1\}$-vectors are core points for any permutation
group~\cite[Lemma 3.7]{2}, we can define a family of \emph{universal}
core points.

\begin{definition}
A point $ u \in \mathbb{Z}^n $ is called a \emph{universal core point} if it is isomorphic to a $\{0 , 1\}$-vector.
\end{definition}

As a heuristic for identifying more points contained in the orbit
polytopes of many (non-core) points, we consider integer points near
universal core points.

\begin{definition}
An integer point $ z $ is called an \emph{atom} if there is a universal core point $ u $ in the layer containing $ z $ such that the distance between $ z $ and $ u $ is~$\sqrt{2}$.
\end{definition}

For example if $ G=\langle (1,2,3,4,5)\rangle $ then the fixed space
is spanned by $\mathbf{1}$ and $ (2,2,2,2,1)=(1,1,1,1,0)+(1,1,1,1,1) $
is a universal core point. The point $ (3,2,2,1,1) $ is an atom since
$ (3,2,2,1,1)-(2,2,2,2,1)=(1,0,0,-1,0)$ hence its distance to the
universal core point $ (2,2,2,2,1) $ is $ \sqrt{2} $.

\begin{definition} \label{active}
Suppose the cyclic group $ G=\langle (g_{1},\ldots,g_{k})\rangle \leq S_{n}$, $ g_{i}\in \{0,\ldots,n-1\} $ acts on $\R^n$  by permuting coordinates.  Then  coordinate $i$ is called \emph{active} if there is $ j\in \{1,\ldots,k\} $ such that $ g_{j}=i$.
 \end{definition}
For example if $ G=\langle (0,1,3,4)\rangle $ acts on $ \R^{5} $ then $ x_{0},x_{1},x_{3},x_{4} $ are active but $ x_{2}  $ is non-active.
\section{Circulant Matrices}
\label{sec:circulant}
Circulant matrices play an important role in finding our constraints because any point $ x $ in the orbit polytope of the integer point $ c\in \Z^{n} $ under the cyclic group $ C_{n} $ can be written as $x=C\lambda$, where $ x \in \R^{n} $, $ \lambda \in [0,1]^{n} $ and $C$ is the  \emph{circulant matrix} of $ c $. 

\begin{definition} A \emph{circulant matrix} is a matrix where each column vector is rotated one element down relative to the preceding column vector. An $ n\times n$  circulant matrix  $\Cir(c)$ takes the form
\begin{equation*}
 C=\begin{bmatrix}
c_{0} & c_{n-1} & \ldots & c_{2} & c_{1}\\
c_{1} & c_{0} & c_{n-1} &\ldots  & c_{2}\\
\vdots & c_{1} & c_{0} & \ddots & \vdots\\
c_{n-2}& & \ddots &\ddots & c_{n-1}\\
c_{n-1}& c_{n-2}& \cdots & c_{1} & c_{0}
\end{bmatrix}. 
\end{equation*}
\end{definition}

One amazing property of circulant matrices is that the eigenvectors are always the same for all $ n\times n $ circulant matrices.  The eigenvalues are different for each matrix, but since we know the eigenvectors a circulant matrix can be diagonalized easily. For more detailed background on circulant matrices see~\cite{Cir}.

The $ m$-th eigenvector $ y^{m} $  for any $n\times n$ circulant matrix $ \Cir(c) $ is given by:
\begin{equation}\label{9}
y^{m}=\frac{1}{\sqrt{n}}(1,w^{-m}_{n},\ldots,w_{n}^{-(n-1)m})^{T},
\end{equation} 
where  $ w^{m}_{n}=\exp(2\pi m i/{n}). $ Suppose $ c=(c_{0},\ldots{},c_{n-1})\in \R^{n} $ (usually for us $ c $ will be an integer point in $ \Z^{n} $). By Euler's formula we have $ \sqrt{n}y^{m} =V_{m}-iU_{m},\; \; m=0,\ldots{},n-1 $, where
\begin{equation}\label{cos}
V_{m} =\left( 1, \cos\left( \frac{2\pi m}{n}\right) ,\ldots{},\cos\left( \frac{2\pi (n-1)m}{n}\right) \right), 
\end{equation}
\begin{equation}\label{sin}
U_{m} =\left( 0, \sin\left( \frac{2\pi m}{n}\right) ,\ldots{},\sin\left( \frac{2\pi (n-1)m}{n}\right) \right). 
\end{equation}

The eigenvalue of the $m$-th eigenvector of the circulant matrix $ \Cir(c) $ is
\begin{align*}
\psi_{m}&=\sum_{k=0}^{n-1}c_{k}w^{km}_{n}=\langle V_{m},c \rangle +i\langle U_{m},c\rangle. \\
\shortintertext{So we have}
\Cir(c)Y&=Y\Psi \; \; \; \; \Longrightarrow \; \; \; \; \; \Cir(c)=Y\Psi Y^{*}, \\
\shortintertext{where}
Y &=[y^{0}\mid \ldots\mid y^{n-1}],
\end{align*}
is the unitary matrix composed of the eigenvectors as columns, and $ \Psi $ is the diagonal matrix with diagonal elements $ \psi_{0},\ldots,\psi_{n-1} $.

The inverse of a circulant  matrix is circulant \cite{Book} and its inverse is given by
\begin{equation}\label{inverse}
\Cir(c)^{-1}=Y\Psi^{-1}Y^{*}.
\end{equation}
Since $ \Psi $ is a diagonal matrix its inverse is also a diagonal matrix with diagonal elements $ \psi^{-1}_{m} $, $m=0,\ldots,n-1$ where
\begin{equation}\label{length}
\dfrac{1}{\psi_{m}}=\dfrac{1}{\sum_{k=0}^{n-1}c_{k}w^{km}_{n}}=\dfrac{1}{\langle V_{m},c \rangle+i\langle U_{m},c\rangle}
=\dfrac{\langle V_{m},c \rangle-i\langle U_{m},c\rangle}{\langle V_{m},c \rangle^{2}+\langle U_{m},c\rangle^{2}}. 
\end{equation}

\begin{remark}
Note that the length of the projection of a vector $ c\in \R^{n} $ onto a complex vector $ v = a+ib\in \C^{n} $ is defined as
\begin{equation}\label{3}
\Vert \mathrm{Proj}^{c}_{v} \Vert^{2}= \dfrac{\Vert \langle c, a\rangle-i\langle c,b \rangle \Vert^2}{\Vert v \Vert^{2}} = \dfrac{\langle c, a\rangle^{2}+\langle c,b \rangle^{2}}{\Vert v \Vert^{2}}.
\end{equation}
Furthermore,  the term $ \langle V_{m},c \rangle^{2}+\langle U_{m},c\rangle^{2} $ in~\eqref{length} is the length of the projection of $c$ onto invariant subspace $ y^{m} $.
\end{remark}

\begin{lemma}\label{T} Let $ c\in \R^{n} $ and suppose $\Cir(c)$ is invertible. Then its inverse is $\Cir( \widehat{T}(c))$,
where $ \widehat{T}(c) $ is defined as follows:
\begin{equation*}
  \widehat{T}(c) =
\setlength{\arraycolsep}{2.5pt}
\medmuskip = 0mu  \dfrac{1}{n}\begin{bmatrix} \dfrac{1}{\langle c,\mathbf{1} \rangle}+ T_{0}(c)\\\dfrac{1}{\langle c,\mathbf{1} \rangle}+ T_{1}(c)\\ \vdots \\\dfrac{1}{\langle c,\mathbf{1} \rangle} +T_{n-1}(c) \end{bmatrix}=\dfrac{1}{n}\begin{bmatrix}
           \dfrac{1}{\langle c,\mathbf{1} \rangle}+\psi^{-1}_{1}+\ldots+\psi^{-1}_{n-1}\\
\dfrac{1}{\langle c,\mathbf{1} \rangle}+w_{n}^{-1}\psi^{-1}_{1}+\ldots+w^{-(n-1)}_{n}\psi^{-1}_{n-1}\\
\vdots\\
\dfrac{1}{\langle c,\mathbf{1} \rangle}+w^{-(n-1)}_{n}\psi^{-1}_{1}+..+w^{-(n-1)^{2}}_{n}\psi^{-1}_{n-1}
\end{bmatrix}.
\end{equation*}
Note that actually $ \dfrac{1}{\langle c,\mathbf{1} \rangle} = \psi_{0}^{-1}. $

\end{lemma}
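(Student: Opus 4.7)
The plan is to exploit the spectral decomposition already stated in \eqref{inverse}, namely $\Cir(c)^{-1}=Y\Psi^{-1}Y^{*}$, together with the cited fact that the inverse of a circulant matrix is circulant. Since a circulant matrix is determined by its first column, it suffices to compute the first column of $Y\Psi^{-1}Y^{*}$ and check that it agrees entrywise with $\widehat{T}(c)$.

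First I would compute $Y^{*}e_{0}$, where $e_{0}$ is the first standard basis vector. The $m$-th entry of $Y^{*}e_{0}$ is the conjugate of the top entry of the eigenvector $y^{m}$, and by \eqref{9} each of these top entries is $\tfrac{1}{\sqrt{n}}$, so $Y^{*}e_{0}=\tfrac{1}{\sqrt{n}}\mathbf{1}$. Next, applying $\Psi^{-1}$ gives the vector $\tfrac{1}{\sqrt{n}}(\psi_{0}^{-1},\psi_{1}^{-1},\ldots,\psi_{n-1}^{-1})^{T}$, and applying $Y$ then produces the first column of the inverse, whose $k$-th entry is
\begin{equation*}
\bigl(Y\Psi^{-1}Y^{*}e_{0}\bigr)_{k}=\sum_{m=0}^{n-1}\frac{1}{\sqrt{n}}W_{n}^{km}\cdot\frac{1}{\sqrt{n}}\psi_{m}^{-1}=\frac{1}{n}\sum_{m=0}^{n-1}W_{n}^{-km}\psi_{m}^{-1},
\end{equation*}
after accounting for the conjugation conventions of \eqref{9} and \eqref{8}.

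Next I would isolate the $m=0$ term. By definition $\psi_{0}=\sum_{k=0}^{n-1}c_{k}=\langle c,\mathbf{1}\rangle$, so $\psi_{0}^{-1}=1/\langle c,\mathbf{1}\rangle$, which gives exactly the leading summand of $\widehat{T}(c)$. The remaining sum $\sum_{m=1}^{n-1}W_{n}^{-km}\psi_{m}^{-1}$ matches the $k$-th entry of $T_{k}(c)$ as displayed in the statement, so the first column of $Y\Psi^{-1}Y^{*}$ coincides with $\widehat{T}(c)$. Invoking that circulant matrices are determined by their first column (or equivalently that $\Cir(c)^{-1}$ is already known to be circulant from \cite{Book}) then yields $\Cir(c)^{-1}=\Cir(\widehat{T}(c))$.

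There is no real conceptual obstacle here; the argument is a direct unwrapping of the spectral formula. The only care required is bookkeeping: matching the sign convention of $W_{n}^{m}$ used in \eqref{9} against the $w_{n}^{-km}$ appearing in the statement, and keeping track of the complex conjugates introduced by $Y^{*}$, so that the indices and phases line up correctly with the display in the lemma.
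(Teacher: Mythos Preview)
Your proposal is correct and follows essentially the same approach as the paper: both use the spectral decomposition $\Cir(c)^{-1}=Y\Psi^{-1}Y^{*}$, invoke the fact that the inverse is circulant, and then compute only the first column. The paper carries this out by writing the rows of $Y\Psi^{-1}$ explicitly and pairing them with the first column $\tfrac{1}{\sqrt{n}}\mathbf{1}$ of $Y^{*}$, whereas you apply the whole product to $e_{0}$; these are the same computation, and your caveat about the $W_{n}$ versus $w_{n}$ sign convention is exactly the bookkeeping point that needs attention.
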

\begin{proof}
By~\eqref{inverse} we have $ \Cir(c)^{-1}=Y\Psi^{-1} Y^{*} $. Now suppose $ k_{hj} $ is the $(h,j)$-th component of $Y\Psi^{-1}$. We have
\begin{equation*}
 k_{hj}= Y_{hj} \Psi_{jj}^{-1}=\dfrac{1}{\sqrt{n}}w^{-(h-1)(j-1)}_{n}\Psi^{-1}_{(j-1)(j-1)} .
\end{equation*}

So, $k^{l}$, the $l$-th row of $Y\Psi^{-1}$,  is equal
\begin{equation*}
k^{l}=\dfrac{1}{\sqrt{n}}\left[\Psi_{00}^{-1}, w^{-(l-1)}_{n}\Psi^{-1}_{11}, w^{-2(l-1)}_{n}\Psi^{-1}_{22},\ldots,w_{n}^{-(n-1)(l-1)}\Psi^{-1}_{(n-1)(n-1)}\right] \,.
\end{equation*}
Now since $ \Cir(c)^{-1} $ is a circulant  matrix~\cite{Book} it is enough to find the first column of $ \Cir(c)^{-1} $ (the other columns can be found by permutation of the first column). Notice that the first row and column of $ Y $ and $ Y^{*} $ is $\frac{1}{\sqrt{n}}\mathbf{1}=\frac{1}{\sqrt{n}}(1,1,\ldots,1)$ so multiplying each row of $ Y\Psi^{-1}$ with vector $\frac{1}{\sqrt{n}}\mathbf{1} $ gives us  the first column of  $ Y\Psi ^{-1}Y^{*} $, which  is
\begin{equation*}
\langle k^{l},\frac{1}{\sqrt{n}}\mathbf{1} \rangle=\frac{1}{n}(\Psi^{-1}_{00}+w^{-(l-1)}_{n}\Psi^{-1}_{11}+ w^{-2(l-1)}_{n}\Psi^{-1}_{22}+\ldots+w^{-(n-1)(l-1)}_{n}\Psi^{-1}_{(n-1)(n-1)}),
\end{equation*}
where $ \Psi^{-1}_{00}={1}/{\langle c,\mathbf{1}\rangle}.$
\end{proof}

The following theorem plays an important role in this paper, and is a useful formula for computing the inverse of an invertible circulant matrix.

\begin{theorem}\label{7} For $c\in \R^n$, the entry  $ T_{k}(c) $ in the column vector $ \widehat{T}(c) $ in  \emph{Lemma~\ref{T}} can be written as
\begin{align*}
T_{k}(c)& =2\sum_{m=1}^{(n-1)/2}\dfrac{1}{\langle V_{m},c\rangle^{2}+\langle U_{m},c\rangle^{2} }\langle \sigma^{-k}(V_{m}),c\rangle  \;\;\; \;\; \text{if n is odd}, \\
T_{k}(c)&=2\sum_{m=1}^{(n-2)/{2}} \dfrac{1}{\langle V_{m},c\rangle^{2}+\langle U_{m},c\rangle^{2} }\langle \sigma^{-k}(V_{m}),c\rangle + \dfrac{(-1)^{k} }{\langle V_{\frac{n}{2}},c\rangle } \;\;\; \text{if n is even}.
\end{align*}
In particular each $ T_{k}(c) $ is a real number.
\end{theorem}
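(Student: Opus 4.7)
The plan is to start from the expansion of $T_k(c)$ given in Lemma~\ref{T}, which expresses it as a sum over $m = 1,\ldots,n-1$ of $\psi_m^{-1}$ times a power of $w_n$, and to collapse this complex expression into a real one by pairing the $m$-th summand with the $(n-m)$-th. The underlying symmetry is that $V_m$ is invariant and $U_m$ changes sign under $m \mapsto n-m$: since $\cos(2\pi j(n-m)/n) = \cos(2\pi jm/n)$ and $\sin(2\pi j(n-m)/n) = -\sin(2\pi jm/n)$, formulas \eqref{cos}--\eqref{8} yield $V_{n-m} = V_m$, $U_{n-m} = -U_m$, and hence $\psi_{n-m} = \overline{\psi_m}$. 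The corresponding powers of $w_n$ are likewise conjugate, so each unordered pair $\{m, n-m\}$ with $m\neq n-m$ contributes twice the real part of the $m$-th summand.

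Next I would compute that real part explicitly. Inserting the expression for $\psi_m^{-1}$ from~\eqref{length} and writing $w_n^{km}$ in Euler form, a direct multiplication of complex numbers shows the real part equals
\begin{equation*}
\frac{\langle V_m, c\rangle\cos(2\pi km/n) - \langle U_m, c\rangle\sin(2\pi km/n)}{\langle V_m, c\rangle^2 + \langle U_m, c\rangle^2}.
\end{equation*}
To recognise the numerator as $\langle \sigma^{-k}(V_m), c\rangle$, I would observe that $\sigma^{-k}$ shifts coordinates so that $(\sigma^{-k}V_m)_j = \cos(2\pi(j+k)m/n)$; the cosine addition formula then rewrites $\langle \sigma^{-k}V_m, c\rangle$ as $\cos(2\pi km/n)\langle V_m, c\rangle - \sin(2\pi km/n)\langle U_m, c\rangle$, matching the numerator exactly.

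Finally I would split into parity cases. For odd $n$, every $m\in\{1,\ldots,n-1\}$ pairs with a distinct $n-m$; running $m$ from $1$ to $(n-1)/2$ produces the first formula of the theorem. For even $n$, the same pairing accounts for $m\in\{1,\ldots,(n-2)/2\}$, while the fixed index $m = n/2$ must be handled separately: here $U_{n/2} = 0$ (all sines vanish), $\psi_{n/2} = \langle V_{n/2}, c\rangle$ is real, and $w_n^{kn/2} = (-1)^k$, giving the isolated term $(-1)^k / \langle V_{n/2}, c\rangle$. Realness of $T_k(c)$ then follows automatically from the construction.

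I expect the main obstacle to be pinning down the correct sign of the cyclic shift, namely verifying that the cross term is $\langle \sigma^{-k}(V_m), c\rangle$ rather than $\langle \sigma^{k}(V_m), c\rangle$. This is ultimately bookkeeping between the conjugation in~\eqref{length}, the sign of the exponent in~\eqref{9}, and the direction in which $\sigma$ acts on coordinates, but it is the only step where a miscount would invalidate the formula; the rest of the argument is mechanical Fourier-style pairing.
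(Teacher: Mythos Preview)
Your proposal is correct and follows essentially the same route as the paper: both start from the Lemma~\ref{T} expansion $T_k(c)=\sum_{m=1}^{n-1} w_n^{-mk}\psi_m^{-1}$, pair the $m$-th term with its conjugate $(n-m)$-th term, reduce via the cosine addition formula to $2\langle \sigma^{-k}(V_m),c\rangle/(\langle V_m,c\rangle^2+\langle U_m,c\rangle^2)$, and treat the unpaired index $m=n/2$ separately when $n$ is even. The only cosmetic difference is that you phrase the pairing as ``twice the real part'' whereas the paper writes out the two summands explicitly; also note that the factor in Lemma~\ref{T} is $w_n^{-km}$ rather than $w_n^{km}$ as you wrote, though your subsequent real-part computation is consistent with the correct sign.
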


\begin{proof}  As  shown in Lemma~\ref{T} we have
  \begin{equation*}
    T_{k}(c)=w^{-k}_{n}\psi^{-1}_{1}+\ldots+w^{-(n-1)k}_{n}\psi^{-1}_{n-1}.
  \end{equation*}
Since $ \psi_{l} $ and $ \psi_{n-l} $  are complex conjugates of each other  we have
  \begin{multline*}
    w^{-lk}_{n}\psi^{-1}_{l}+w^{-(n-l)k}_{n}\psi_{n-l}^{-1}=
    ( \cos(\frac{2lk\pi }{n})-i\sin(\frac{2lk\pi }{n}))\dfrac{\langle c,V_{l}\rangle -i\langle c,U_{l}\rangle}{\langle c,V_{l}\rangle^{2} +\langle c,U_{l}\rangle^{2}}\\
    +(\cos(\frac{2lk\pi }{n})+i\sin(\frac{2lk\pi }{n}))\dfrac{\langle c,V_{l}\rangle +i\langle c,U_{l}\rangle}{\langle c,V_{l}\rangle^{2} +\langle c,U_{l}\rangle^{2}}=\\
\dfrac{2\cos(\frac{2lk\pi}{n})\langle c, V_{l} \rangle-2\sin(\frac{2lk\pi }{n})\langle c,U_{l}\rangle}{\langle c,V_{l}\rangle^{2} +\langle c,U_{l}\rangle^{2}}=
2\dfrac{\langle c,\sigma^{-k}(V_{l}) \rangle}{\langle c,V_{l}\rangle^{2} +\langle c,U_{l}\rangle^{2}}.
  \end{multline*}
Recall that $ w^{m}_{n}=\cos\left( \frac{2\pi m}{n}\right) +i\sin\left( \frac{2\pi m}{n}\right) $ so the last equality holds because $ \cos A\cos B-\sin A\sin B=\cos\left( A+B\right) $ and $ V_{m} $ is in terms of $\cos$ and $ U_{m} $ is in terms of $ \sin $  (see~\eqref{cos} and~\eqref{sin}).

If $ n $ is even then for $ l=\frac{n}{2} $ we have $ \psi_{\frac{n}{2}}=\psi_{n-l} $ and $ y^{\frac{n}{2}}=y^{n-l} $ so $w^{-\frac{n}{2}k}_{n}\psi^{-1}_{\frac{n}{2}}$ does not give a complex conjugate pair. But since the imaginary part of it is zero, we have

\begin{equation*}
 w^{-\frac{n}{2}k}_{n}\psi^{-1}_{\frac{n}{2}}= \cos\left( -k\pi\right) \dfrac{\langle c, V_{\frac{n}{2}}\rangle}{\langle c, V_{\frac{n}{2}}\rangle^{2}}=\dfrac{\cos\left( k\pi\right) }{\langle c, V_{\frac{n}{2}}\rangle}=\dfrac{(-1)^{k}}{\langle c, V_{\frac{n}{2}}\rangle}. \qedhere
\end{equation*}
\end{proof}

\begin{lemma}\label{T0}
Suppose for $ c\in \R^{n} $ that $\Cir(c)$ is invertible. Then   $\sum_{k=0}^{n-1} T_{k}(c)=0 $.
\end{lemma}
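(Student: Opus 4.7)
The plan is to exploit the fact that $\mathbf{1}$ is a shared eigenvector of every $n \times n$ circulant matrix, and translate this into a constraint on the sum of the entries of the first column of $\Cir(c)^{-1}$, which is exactly $\widehat{T}(c)$.

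First I would observe that $\Cir(c)\mathbf{1} = \langle c,\mathbf{1}\rangle\,\mathbf{1}$, so $\mathbf{1}$ is an eigenvector of $\Cir(c)$ with eigenvalue $\psi_{0} = \langle c,\mathbf{1}\rangle$. Since $\Cir(c)$ is assumed invertible, this eigenvalue is nonzero and $\mathbf{1}$ is an eigenvector of $\Cir(c)^{-1}$ with eigenvalue $1/\langle c,\mathbf{1}\rangle$. Equivalently, every row sum of $\Cir(c)^{-1}$ equals $1/\langle c,\mathbf{1}\rangle$. By Lemma~\ref{T} the matrix $\Cir(c)^{-1}$ is circulant, so its columns are cyclic permutations of each other and in particular all have the same sum; hence the sum of the entries of the first column also equals $1/\langle c,\mathbf{1}\rangle$.

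Next I would write this column sum using the explicit description of $\widehat{T}(c)$ given in Lemma~\ref{T}. Since the first column of $\Cir(c)^{-1}$ is $\widehat{T}(c)$, we get
\begin{equation*}
\frac{1}{\langle c,\mathbf{1}\rangle} \;=\; \sum_{k=0}^{n-1}\widehat{T}(c)_{k} \;=\; \frac{1}{n}\sum_{k=0}^{n-1}\left(\frac{1}{\langle c,\mathbf{1}\rangle} + T_{k}(c)\right) \;=\; \frac{1}{\langle c,\mathbf{1}\rangle} + \frac{1}{n}\sum_{k=0}^{n-1}T_{k}(c),
\end{equation*}
and subtracting $1/\langle c,\mathbf{1}\rangle$ from both sides yields the claim.

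I do not expect any real obstacle here; the only thing to verify carefully is that $\Cir(c)^{-1}$ is indeed circulant (so that row sums equal column sums), but this was already established in Lemma~\ref{T} by the same argument that produces the vector $\widehat{T}(c)$. As a sanity check one could also prove the statement directly from Theorem~\ref{7}: swapping the order of summation, the term for each fixed $m \in \{1,\ldots,\lfloor(n-1)/2\rfloor\}$ contains the factor $\sum_{k=0}^{n-1}\langle \sigma^{-k}(V_{m}),c\rangle$, whose $j$-th coordinate contribution reduces to $\sum_{k=0}^{n-1}\cos(2\pi(j+k)m/n) = 0$ since $m \not\equiv 0 \pmod{n}$; and in the even case the extra term contributes $\sum_{k=0}^{n-1}(-1)^{k}/\langle V_{n/2},c\rangle = 0$. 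This corroborates the eigenvalue argument but is more computational, so I would present the eigenvalue proof as the main one.
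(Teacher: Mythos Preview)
Your proof is correct and, in my view, cleaner than the paper's. The paper proves the lemma by a detour through the orbit polytope: it picks an arbitrary $x\in\conv(G_{c})$, writes $\lambda=\Cir(c)^{-1}x$, expands $\sum_{i}\lambda_{i}$ using the circulant structure, and then invokes the convex-combination constraint $\sum_{i}\lambda_{i}=1$ together with $\sum_{i}x_{i}=\langle c,\mathbf{1}\rangle\neq 0$ to force $\sum_{k}T_{k}(c)=0$. In effect the paper is computing $\mathbf{1}^{T}\Cir(c)^{-1}x$ for a particular $x$ satisfying $\mathbf{1}^{T}x=\langle c,\mathbf{1}\rangle$.

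You bypass the geometry entirely and go straight to the linear-algebra content: $\Cir(c)^{-1}\mathbf{1}=\mathbf{1}/\langle c,\mathbf{1}\rangle$, hence every row (and, by circulancy, every column) of $\Cir(c)^{-1}$ sums to $1/\langle c,\mathbf{1}\rangle$, and comparing with the explicit first column $\widehat{T}(c)$ finishes it. This is the same identity the paper is ultimately using, just reached without the scaffolding of $\conv(G_{c})$ and $\lambda$. The paper's route has the minor virtue of foreshadowing how $\widehat{T}(c)$ will be used later (to test membership in the orbit polytope), but as a proof of the lemma itself your eigenvector argument is shorter and more transparent. Your Theorem~\ref{7} sanity check is also valid and is yet a third route, distinct from both.
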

\begin{proof}
  From Lemma~\ref{T} we have
  \begin{align*}
    \sum_{k=0}^{n-1} T_{k}(c) &= \sum_{k=0}^{n-1}\sum_{m=1}^{n-1}w_n^{-km} \psi_m^{-1}= \sum_{m=1}^{n-1} \psi_m^{-1} \sum_{k=0}^{n-1}w_n^{-km}\,.\\
    \shortintertext{Since the inner sum is geometric, for $m<n$}
    \sum_{k=0}^{n-1}w_n^{-km} &= 0\,. \qedhere
  \end{align*}
\end{proof}

\begin{remark}\label{aff}
Note that if $ z,c \in \Z_{(k)}^{n}, \; k\neq 0, $ and $ z=\Cir(c)\lambda $ then 
\begin{equation*}
  k= \langle \mathbf{1},z\rangle = \langle \mathbf{1},\Cir(c)\lambda \rangle = \langle \mathbf{1},c \rangle \langle \mathbf{1},\lambda \rangle=k\langle \mathbf{1},\lambda \rangle.
 \end{equation*}
This implies that $\langle \mathbf{1},\lambda \rangle=1.$  Now suppose  $ \Cir(c) $ is invertible, so $ \lambda= \Cir(c)^{-1}z $. To check if $ z \in \conv(G_{c}) $ or not, we only need to check if $ \lambda= \Cir(c)^{-1}z \geq \mathbf{0}$. The constraint $\sum_{i=0}^{n-1}\lambda_{i}=1$ is redundant.
\end{remark}

Let us denote by $ \overline{T}(c) $ the first row of $ \Cir(\widehat{T}(c)) $ which is  
\begin{equation*}
\overline{T}(c) =  \dfrac{1}{n}\left[   \dfrac{1}{\langle c,\mathbf{1} \rangle}+T_{0}(c),\dfrac{1}{\langle c,\mathbf{1} \rangle}+T_{n-1}(c),\ldots{},\dfrac{1}{\langle c,\mathbf{1} \rangle}+T_{1}(c) \right]. 
\end{equation*}
This  will simplify the notation in following sections.
\subsection{New Constraints for Singular Circulant Matrices  }
The circulant matrix $\Cir(c)$ corresponding to an integer point $c\in \Z^{n}$ is not invertible if and only if the determinant of $ \Cir(c) $ is zero. Since the determinant of a square matrix is equal to the product of its  $ n $ eigenvalues we have
\begin{equation*}
\det(\Cir(c))=\prod_{j=0}^{n-1} (\langle V_{j},c \rangle +i\langle U_{j},c\rangle)\,.
\end{equation*}
Furthermore, since for $ k=1,\ldots{},n-1 $,  $ V_{n-k}+iU_{n-k} $ is the complex conjugate of $ V_{k}+iU_{k} $ we have 
\begin{equation*}
  \det(\Cir(c))=
  \begin{cases}
    \langle V_{0},c \rangle \prod_{j=1}^{(n-1)/{2}}(\langle V_{j},c \rangle^{2} +\langle U_{j},c\rangle^{2}) &  n \text{ odd,}\\[1.1ex]
    \langle V_{0},c \rangle \langle V_{\frac{n}{2}},c\rangle \prod_{j=1}^{(n-2)/{2}}(\langle V_{j},c \rangle^{2} +\langle U_{j},c\rangle^{2}) & n \text{ even.}
  \end{cases}
\end{equation*}
So in order to check if an ILP has an integer solution whose  circulant matrix is singular, we could add the following constraints to the problem
\begin{align}
  \label{invert}
  \langle V_{0},c \rangle \prod_{j=1}^{(n-1)/{2}}(\langle V_{j},c \rangle^{2} +\langle U_{j},c\rangle^{2})&=0 \; \;  \; \; \;\; \;  \text{if} \; n \; \text{is odd},\\
  \label{inverT1}
\langle V_{0},c \rangle \langle V_{\frac{n}{2}},c\rangle \prod_{j=1}^{(n-2)/{2}}(\langle V_{j},c \rangle^{2} +\langle U_{j},c\rangle^{2})&=0 \; \; \;\; \; \; \text{if} \; n \; \text{is even}.
\end{align}
Equations~\eqref{invert} and~\eqref{inverT1} involve polynomials in $c$ of degree $n$. We can reformulate them in a way that make them easier to solve in practice by introducing a large positive constant $M$ and ${0,1}$-valued variables
$ r_{m} $. Let  $ P_{j} $ be a term in the products~\eqref{invert} and~\eqref{inverT1}, namely,
$ P_{j}=\langle V_{j},c\rangle $ when $j = 0$ or (for $ n $ even) when
$j = \frac{n}{2}$. Otherwise,
$ P_{j}= \langle V_{j},c \rangle^{2} +\langle U_{j},c\rangle^{2} \geq0 $.
For $j=0,\ldots{}, \lceil ({n-1})/{2}\rceil $, add constraints
\begin{align}
  \label{in}
  P_{j} & \leq  r_{j}  P_{j}  &  j \not \in \{ 0,\frac{n}{2} \} \cap \Z,\\
  -  r_{j}  M  \leq P_{j} &\leq  r_{j}  M    &\text{otherwise}. \nonumber
\end{align}
Finally add the following constraint to force at least one $r_j$ to $0$:
\begin{equation}
\label{on}
\sum_{j=0}^{\left\lceil (n-1)/{2}\right\rceil} r_{j}  \leq \left\lceil \frac{n-1}{2}\right \rceil\,.
\end{equation}
Note that constraints~\eqref{in} and~\eqref{on} forces at least one of
the $P_{j}$ to be zero and so the determinant will be zero.  For
maximization problems the constant $M$ can be chosen as the absolute
value of the objective value of the LP relaxation (assuming the
objective function $f(x)=\langle \mathbf{1},x \rangle$).

\begin{remark}
We can formulate~\eqref{invert} and~\eqref{inverT1} in different ways. For example we can make $ \left\lceil \frac{n-1}{2}\right\rceil$ subproblems by adding each  $ P_{m} $ separately. The corresponding constraints in each subproblem are linear. For example in the $ h $-th subproblem, $ h\in \{1,\ldots, \left\lceil \frac{n-1}{2}\right\rceil \}$, we have 
\begin{equation*}
P_{h}= \langle V_{h},c \rangle^{2} +\langle U_{h},c\rangle^{2} =0,
\end{equation*}
which can be simplified as
\begin{equation*}
\langle V_{h},c \rangle =0 \text{ and } \langle w_{h},c \rangle =0.
\end{equation*}
The weakness of this formulation is that adding these constraints does not simplify the problem enough because in each step we are searching in $ n-2 $ dimensions which for large $ n $ is not a sufficient dimension reduction.  
\end{remark}
\subsection{New Constraints for Non-Singular Circulant Matrices  }
In this section we develop some new constraints to find an outer approximation of core points. These new constraints  depend only on the symmetry group and not the ILP.

Suppose $ c $ is an arbitrary integer point in $\Z^{n}_{(k)}, \; k\neq 0,$ and $ G $ is the cyclic group with order $ n $ which acts on coordinates as usual. By  Remark~\ref{aff} a point $ z \in \Z^{n}_{(k)}$ is in $\conv(G_{c})$ if and only if there exist $ \lambda=(\lambda_{0},\ldots,\lambda_{n-1}) $ such that
\begin{align}
  \SwapAboveDisplaySkip
  z&=\Cir(c)\lambda, \qquad \lambda_{i} \geq 0.\nonumber\\
  \shortintertext{If $ \Cir(c) $ is invertible  we have}
  \label{lambda_def}
  \lambda& = \Cir(c)^{-1}z, \nonumber\\
  \shortintertext{and by Lemma~\ref{T}}
  \lambda_{j}&= \langle (\sigma^{j}(\overline{T}(c)),z\rangle = \langle \overline{T}(c), \sigma^{-j}(z) \rangle\,.               
\end{align}

The orbit polytope of a core point is lattice-free. Still assuming that $\Cir(c)$ is invertible, we conclude that  $ c \in \mathbb{Z}^{n}_{(k)} $ is a core point if and only if for each $ z \in \mathbb{Z}^{n}_{(k)} $ which is not  in the $ G $-orbit of $  c$, 
\begin{equation}\label{lambda}
\min_j\;  \langle \overline{T}(c), \sigma^{-j}(z) \rangle  <0,
\end{equation}
Since $ c $ and $ z $ lie in the same layer $\langle c, \mathbf{1} \rangle=k  $, $\lambda_j<0$ is equivalent to (interpreting indices modulo $n$)
\begin{equation*}
 z_{n+j-0}T_{0}(c)+z_{n+j-1}T_{1}(c)+\ldots+z_{n+j-(n-1)}T_{n-1}(c)+1 <0.
\end{equation*}
Informally, we may say that for at least one permutation of subscripts, we have
\begin{equation}\label{6}
 z_{0}T_{0}(c)+z_{1}T_{1}(c)+\ldots+z_{n-1}T_{n-1}(c)+1 <0. 
\end{equation}
It should be mentioned that, for almost all $ z \in \Z^{n}$ the
constraint $ \langle z, \overline{T}(c)\rangle <0 $ is nonlinear and
non-convex with respect to $ c $.  Considering a single $z$ in
isolation, we can assume $ \lambda_{0} $ is negative, because
if for a core point $ c $, $ \lambda_{0} $ is not negative, there is a
permutation of $ c $ that has negative~$ \lambda_{0} $.

Define
\begin{equation*}
 H(z)=   z_{0}T_{0}(c)+ z_{1}T_{1}(c)+\ldots+z_{n-1}T_{n-1}(c)+1 ,
\end{equation*}
so that $ H(z)=0 $ is the equation of a hyperplane  in $ \mathbb{R}^n $ with non-zero normal vector  $ ( T_{0}(c),T_{1}(c), \ldots, T_{n-1}(c)) $ which is perpendicular to the fixed space (Lemma~\ref{T0}). (If all $T_{j}(c) = 0$, then $ \widehat{T}(c) $ in Lemma~\ref{T} is a multiple of $ \mathbf{1} $. But this makes $\Cir(c)^{-1}$ non-invertible). More precisely, $ c \in \mathbb{Z}^n_{(k)} $ is a core point if for all integer points $ z\in \mathbb{Z}^n_{(k)} $ there is an index $ j\in [n] $ such that $ H(\sigma^{j}(z))<0 $.

Suppose  $ Q $ is the orbit polytope of a non-core point and $ R $ is the orbit polytope of a universal core point in the same layer. Intuitively, we expect that  many integer points whose orbit polytope contains $ Q $, also have orbit polytope containing  $ R $. The idea for making new constraints is to remove from the feasible region all integer points whose orbit polytopes contain atoms or universal core points. This process can be done by searching layer by layer in the feasible region.

\begin{lemma}\label{iso}
For two co-projective integer points $ c $,$ c^{\prime} \in \Z^{n}$ we have 
\begin{equation*}
  T_{j}(c)=T_{j}(c^{\prime}) \text{ for all } \; j=0,\ldots,n-1 .
\end{equation*}
Moreover, the constraints~\eqref{lambda} are invariant under translation in the fixed space. That is, if $ z $ and $ z^{\prime}$ are two co-projective integer points in the same non-zero layers as $ c $ and $ c^{\prime} $ respectively, we have
\begin{equation*}
 \langle z, \overline{T}(c) \rangle= \langle z^{\prime} ,\overline{T}(c^{\prime}) \rangle.
\end{equation*}
\end{lemma}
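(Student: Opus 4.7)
The plan is to attack the two assertions in turn, using the explicit formulas for $T_k(c)$ from Theorem~\ref{7} together with the key observation that, for $m=1,\ldots,n-1$, the trigonometric vectors $V_m$ and $U_m$ (and their cyclic shifts) are orthogonal to $\mathbf{1}$. This follows immediately from $\sum_{j=0}^{n-1}\cos(2\pi m j/n)=\sum_{j=0}^{n-1}\sin(2\pi m j/n)=0$ whenever $m\not\equiv 0\pmod n$, and cyclic shifts preserve the coordinate sum so $\langle\sigma^{-k}V_m,\mathbf{1}\rangle=0$ as well. In the even case, the additional value $V_{n/2}=(1,-1,1,\ldots,-1)$ is likewise $\mathbf{1}$-orthogonal.

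For the first assertion, write $c'=c+\ell\mathbf{1}$ for some $\ell\in\Z$. From the $\mathbf{1}$-orthogonality above, for every $m=1,\ldots,n-1$ and every shift $k$,
\begin{equation*}
\langle V_m,c'\rangle=\langle V_m,c\rangle,\quad \langle U_m,c'\rangle=\langle U_m,c\rangle,\quad \langle\sigma^{-k}V_m,c'\rangle=\langle\sigma^{-k}V_m,c\rangle.
\end{equation*}
Plugging these identities into the two formulas of Theorem~\ref{7} shows that every summand defining $T_k(c)$ agrees with the corresponding summand for $T_k(c')$, in both the odd and even cases; hence $T_k(c)=T_k(c')$ for all $k$.

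For the second assertion, note first that since $z$ and $c$ lie in the same non-zero layer, and likewise $z'$ and $c'$, writing $z'=z+j\mathbf{1}$ and $c'=c+\ell\mathbf{1}$ and comparing coordinate sums forces $j=\ell$; call this common integer $\ell$. Expanding $n\langle z,\overline{T}(c)\rangle$ using the definition of $\overline{T}(c)$ and the fact $\langle z,\mathbf{1}\rangle=\langle c,\mathbf{1}\rangle$, the $\frac{1}{\langle c,\mathbf{1}\rangle}$ terms contribute exactly $1$, so
\begin{equation*}
n\langle z,\overline{T}(c)\rangle-1=\sum_{i=0}^{n-1} z_i\,T_{\pi(i)}(c),
\end{equation*}
where $\pi$ is the fixed permutation of indices prescribed by $\overline{T}$; the same formula holds with primes. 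Since $T_k(c')=T_k(c)$ by the first part, the difference of the two sides equals $\ell\sum_i T_{\pi(i)}(c)=\ell\sum_i T_i(c)=0$ by Lemma~\ref{T0}. Therefore $\langle z,\overline{T}(c)\rangle=\langle z',\overline{T}(c')\rangle$, which also shows that the constraint~\eqref{6} is invariant under a simultaneous translation of $z$ and $c$ in the fixed space.

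The main obstacle is really bookkeeping rather than substance: one must keep straight the permutation of indices hidden inside $\overline{T}(c)$ versus $\widehat{T}(c)$, verify that the shift integers $\ell$ for $(c,c')$ and $(z,z')$ must coincide once both pairs lie in the same non-zero layers, and handle the extra $(-1)^k/\langle V_{n/2},c\rangle$ summand for even $n$ (which is harmless precisely because $V_{n/2}\perp\mathbf{1}$). Apart from these details, the proof is a direct application of $\mathbf{1}$-orthogonality of the non-trivial Fourier modes together with Lemma~\ref{T0}.
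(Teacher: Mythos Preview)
Your proof is correct and follows essentially the same strategy as the paper: orthogonality of the non-trivial Fourier modes to $\mathbf{1}$ gives $T_i(c)=T_i(c')$, and then Lemma~\ref{T0} absorbs the extra $\ell\sum_i T_i(c)$ when passing from $z$ to $z'$. The only cosmetic difference is that the paper argues the first part directly from the complex eigenvalues $\psi_m=\sqrt{n}\langle c,y^m\rangle$ (so $\psi_m=\psi'_m$ for $m\neq 0$, hence $T_i(c)=\sum_{m\geq 1}w_n^{-im}\psi_m^{-1}$ is unchanged), whereas you unpack the real formula of Theorem~\ref{7}; and you make explicit the observation that the shift integers for $(c,c')$ and $(z,z')$ must coincide, which the paper leaves implicit.
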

\begin{proof}
 Recall from Definition~\ref{equiv} that $c'=c+k\mathbf{1}$ for some $k \in \mathbb{Z}$.
Since $\mathbf{1}$ is orthogonal to the other  eigenvectors  $ y^{m} $, $ \; m=1,\ldots,n-1 $, we have 
\begin{align*}
  \psi_{m}&=\sqrt{n}\langle c,y^{m}\rangle =\sqrt{n}\langle c+k\mathbf{1} , y^{m}\rangle=\sqrt{n}\langle c^{\prime},y^{m}\rangle\\
  &= \psi^{\prime}_{m} \; \; \; \;   \forall m=1,\ldots,n-1\\
 \psi_{0}&=\langle c, \mathbf{1}\rangle,  \; \; \;  \psi_{0}^{\prime}=\langle c^{\prime},\mathbf{1}\rangle = \langle c, \mathbf{1} \rangle + \langle k \mathbf{1},\mathbf{1}\rangle\,.
\end{align*}
Furthermore $ \Cir(c) $ and $ \Cir(c^{\prime}) $ have different inverses. But by Lemma~\ref{T} $ T_{j}(c)=\sum_{m=1}^{n-1}w^{-jm}_{n}\psi^{-1}_{m} $. Since $ \psi_{m}=\psi_{m}^{\prime} $ we get $ T_{j}(c)=T_{j}(c^{\prime}),\; j=0,\ldots,n-1. $
Now let $ z\in \Z^{n} $ and $z^{\prime} = z + k\mathbf{1}$ be in the same layers as $ c $ and $ c^{\prime} $
respectively. With the help of Lemma~\ref{T0} we have
\begin{align}
n  \langle z,\overline{T}(c) \rangle &= z_0 T_0(c) + z_{1}T_{n-1}(c) + \dots + z_{n-1} T_{1}(c) + 1 \nonumber \\
                      &= z_0 T_0(c) + \dots + z_{n-1} T_{1}(c) + k (T_0(c) + \dots + T_{n-1}(c)) + 1 \nonumber \\
                      &= (z_0 + k) T_0(c) +(z_{1}+k) T_{n-1}(c)+ \dots + (z_{n-1} + k) T_{1}(c) + 1 \nonumber \\
                      &= z_0^\prime T_0(c) +z_{1}^{\prime}T_{n-1}(c)+ \dots + z^\prime_{n-1} T_{1}(c) + 1 =n  \langle z^\prime, \overline{T}(c^{\prime}) \rangle\,. \qedhere
\end{align}
\end{proof}

\begin{figure}
  \begin{center}
    \includegraphics[width=\textwidth]{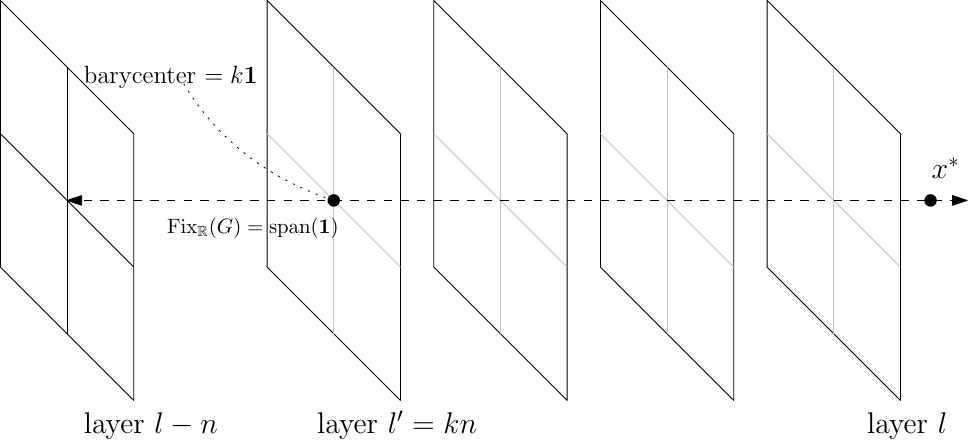}
  \end{center}
  \caption{Illustrating the proof of Lemma~\ref{n-layers}.}
  \label{fig:layers}
\end{figure}

The main idea of our algorithm for the transitive case (all variables active) is as follows.

\begin{lemma}
  \label{n-layers}
  Let $P$ be an integer linear maximization problem with $n$
  variables, objective function $f(x)=\langle \mathbf{1}, x \rangle$
  and transitive symmetry group $G$. Let the LP relaxation of $P$ have
  feasible region $R$ and optimal solution $x^*$.  Let $P_j$ be the
  integer feasibility problem obtained by intersecting $R$ with layer
  $j$.  Let $l=\lfloor f(x^*) \rfloor$. Define
  $F= \{ l-n < j \leq l \mid P_j \text{ is feasible} \}$.
  If $F$ is empty, then $P$ is infeasible; otherwise the optimal
  solution is in layer $k=\max(F)$.
\end{lemma}

\begin{proof}
  In the following discussion, we use \emph{above} and \emph{below} to
  refer to the natural ordering of layers by index.  Since we assume
  $G$ acts by permuting coordinates, there is no real loss of
  generality in fixing the objective function to $f(x)=\langle \mathbf{1}, x\rangle$.  As we
  already observed, since $G$ is transitive
  $\Fix_{\R}(G)=\Span(\mathbf{1})$.  It is clear that layer $l$ is the
  highest layer which could intersect $R$; furthermore if $F$ is non-empty
  the optimal solution of $P$ must lie in the highest layer with
  non-empty intersection with $R$. Suppose $F$ is empty, so no layer
  above $l-n$ is feasible.

  Define the \emph{orbit-barycenter} $b(x)$ for $ x\in \R^{n} $ as
  \begin{equation*}
    b(x)=\dfrac{1}{\vert G \vert}\sum_{g \in G} gx.
  \end{equation*}
  Note that this point lies in $\Fix_{\R}(G)$ and $\langle \mathbf{1}, x \rangle = \langle \mathbf{1}, b(x) \rangle$;
  furthermore if $x \in R$ then $b(x)\in R$. This allows us to assume
  w.l.o.g.\  that $x^*\in\Fix_{\R}(G)$.

  There exists integer $k$ satisfying $l-n < l'=kn \leq l$. The
  integer point $k\mathbf{1}$ is the unique intersection of layer $l'$
  and $\Fix_{\Z}(G)$, so since $P_{l'}$ is infeasible, $R$ does not intersect
  layer $l'$. Convexity then tells us that no layer below $l'$
  intersects $R$ (as the segment of $\Fix_{\R}(G)$ between such a layer
  and $x^*$ goes through $k\mathbf{1}$).
\end{proof}

Note that in order to check if an integer point $ c\in \Z^{n}_{(k)} $ is a core point or not it is impossible  to add  constraints of the form of~\eqref{lambda} for all  $ z\in  \Z^{n}_{(k)} $ because there are infinitely many integer points in each layer. But we can add constraints for a finite subset of integer points in $  \Z^{n}_{(k)}$.  As mentioned earlier, it makes sense to choose atom points and universal core points since they are the closest integer points to the barycenter. For this finite set we have the following definition.

\begin{definition}In each layer $ k $, any choice of  set of atoms and universal core points  for making inequalities~\eqref{lambda} is called an \emph{essential set}  of the layer $ k $ and is denoted by $ E^{k} $.
\end{definition}
Note that since vectors in $ \lbrace 0,1\rbrace^{n} $ are core points, a non-empty choice of essential set always exists.
\begin{definition}\label{cong}
We say that layers $ l $ and $ l^{\prime} $ in $ \Z^{n} $ are \emph{congruent} if $l\equiv l^{\prime}  $  $ \pmod {n} $.
\end{definition}

Lemma~\ref{T0}  plays an important role in defining the specific essential set for any layer.
\begin{remark}\label{100}
  We can translate each integer point $ z \in E^{k}$ through the fixed space to get an integer point with entries in $\{ -1,-2 ,0,1, 2\}$ and use
  inequalities~\eqref{lambda}, which can be written as
\begin{equation}\label{19}
1+ \langle \sigma^{-j}(z), (T_{0}(c),T_{n-1}(c),\ldots{},T_{1}(c)) \rangle <0.
\end{equation}
Since $ T_{0}(c)+T_{1}(c)+\ldots+T_{n-1}(c) =0$, the inequality~\eqref{19} holds for both or neither co-projective 
points $ z,z^{\prime} $ in congruent layers.
 \end{remark}
 
For example, for an ILP in $ \R^{6} $, we can define an essential set  in  layer $ 21 $ by:
\begin{equation*}
  E^{21}=\{ (4,4,4,3,3,3), (4,3,4,4,3,3),(4,3,4,3,4,3),(5,3,4,3,3,3)\}.
\end{equation*}
The corresponding constraint for $ z=(4,4,4,3,3,3) $, $j=0$ is
\begin{equation}\label{vay}
1+4T_{0}(c)+4T_{5}(c)+4T_{4}(c)+3T_{3}(c)+3T_{2}(c)+3T_{1}(c)<0. 
\end{equation}
Since by Lemma~\ref{T0} we have  $ T_{0}(c) +T_{1}(c)+T_{2}(c)+T_{3}(c)+T_{4}(c)+T_{5}(c)=0$, inequality~\eqref{vay} can be written as
\begin{equation*}
  1+T_{0}(c)+T_{5}(c)+T_{4}(c) <0.
\end{equation*}
Furthermore, we can define the essential set  as follows and use inequality~\eqref{19} for making new constraints.
\begin{equation*}
  E^{3}=\{ (1,1,1,0,0,0), (1,0,1,1,0,0),(1,0,1,0,1,0),(2,0,1,0,0,0)\}.  
\end{equation*}

 If we consider layers $1,\ldots{},n$ as representatives for all congruence classes, we  can define the essential set  for layers $ 1,\ldots{},n $ and use universal or atom points with entries $\{ -2,-1,0,1,2\}$.
  \begin{definition}
The essential set in any layers $ 1,\ldots{},n $ is called the \emph{projected essential set} and it is denoted  by $ \widehat{E}^{k} $ for $ k=1,\ldots{},n $.
\end{definition}

The idea is that  first we search for integer points whose circulant matrix is singular. Next we consider the case where the circulant matrix is non-singular: by adding new constraints in each layer $ k $ we search for an integer point $c$ whose orbit polytope does not contain the integer points of the essential set $ E^{k} $.  Since  $ \Cir(c) $ is non-singular in this step, we add the following constraints (cf.\ Theorem~\ref{7} and~\eqref{invert}, \eqref{inverT1}):
\begin{equation}\label{smooth}
\langle V_{m},c\rangle^{2}+\langle U_{m},c\rangle^{2} >0  \; \; \; \; \; \; \forall m=1,\ldots,\lceil (n-1)/2 \rceil.
\end{equation}
We will present variations on this idea for different kinds of symmetry groups. In all of these variations there are three different types of subproblems as follows:
\begin{enumerate}
\item[ ($ Q_{1}$)] Add constraints~\eqref{lambda} (or \eqref{6}) and \eqref{smooth} for each point in the projected essential set.
\item[ ($ Q_{2} $)] Add constraints~\eqref{in} and~\eqref{on} for the case of singular circulant matrices.
\item[ ($Q_{3}$)] Check the feasibility of integer points in  the essential sets. For Algorithm~\ref{alg:transitive} they can be tested individually, for Algorithms 2 and 3 see Remark~\ref{feas}.
\end{enumerate}

Algorithm~\vref{alg:transitive} follows the proof of Lemma~\ref{n-layers} for the case where
all variables are active.

\begin{algorithm}[hbt]
\caption{Maximize with all variables active}
\label{alg:transitive}
  \begin{algorithmic}[1]
  \Require Maximization ILP $P$ with bounded relaxation, transitive symmetry group, and objective function $f$.
  \Ensure Optimal objective value, or $-\infty$ if infeasible.
  \State Construct and solve a subproblem $P_0$ of type $Q_2$ by adding constraints~\eqref{in} and~\eqref{on} to $P$.
  \label{transitive-singular}
  \State If $P_0$ has integer optimal solution $ \widehat{z}_{1}$, set $f_1=f(\widehat{z}_1)$ otherwise set $f_1=-\infty $.
  \State Solve the LP  relaxation of $ P $, let $ x^{*} $ be the optimal solution and let $l=\lfloor f(x^{*})\rfloor$, $l'=\lfloor l/n\rfloor n$ (cf.~Figure~\ref{fig:layers}).
  \For{$i=l$ down to $l'+1$}
  \For{$j=1,\ldots{},m_{i}$}
  \State Construct and solve  subproblem $ P^{j}_{i} $ of type $Q_{3}$ for checking the feasibility of  $z^j\in E^{i}=\{z^{1},\ldots{},z^{m_{i}}\}$. \Comment{Unprojected essential set}.
  \If{$P^j_i$ has integer feasible solution $\widehat{z}_{2}$}
  \State \Return  $ \max \{f_1,f(\widehat{z}_{2})\} $
  \EndIf
  \EndFor
  \State Construct and solve a subproblem $ P_{i} $ of type $ Q_{1} $ by  adding constraints~\eqref{lambda}  for each $ z^{j} \in \widehat{E}^{i}$  along with constraints ~\eqref{smooth} for $0 \leq m \leq \lceil (n-1)/2 \rceil$.
  \label{transitive-nonsingular} \Comment{Projected essential set}
  \If{$P_i$ has integer feasible solution $\widehat{z}_{3}$}
  \State \Return  $ \max \{f_1,f(\widehat{z}_{3})\} $
  \EndIf
  \EndFor
  \State \Return $-\infty$
\end{algorithmic}
\end{algorithm}

Here are some observations related to Algorithm~\ref{alg:transitive}:
\begin{enumerate}
\item There might be some non-core integer points which satisfy  constraints~\eqref{lambda} for the given essential set.
\item By Lemma~\ref{T0}, in each layer $ L $  we need at most $\left\lceil (n-1)/2\right\rceil$ terms $ T_{k}, $ to make constraints~\eqref{lambda}. For example if $ n=6 $, in layer $ 5 $ for universal core point $ (1,1,1,1,1,0) $  we can use $1-T_{5} < 0  $ rather than $ 1+T_{0}+T_{1}+T_{2}+T_{3}+T_{4} <0. $
\end{enumerate}
The subproblems in steps~\ref{transitive-singular}
and~\ref{transitive-nonsingular} of Algorithm~\ref{alg:transitive} are NP-hard, so there are no known
worst case efficient algorithms. On the other hand, the cost of
generating constraints, solving the LPs, and testing points for
feasibility is all negligible. Actually, solving a symmetric LP can be
done faster since it is enough to search through the fixed space (cf.\
\cite[Theorem 1]{BHJ2013}). Since adding new constraints in
steps~\ref{transitive-singular} and~\ref{transitive-nonsingular} creates a nonlinear and non-convex mixed
integer program, we should use a solver that can handle these
constraints. In theory solving a nonlinear integer
program is harder than the linear one, but in practice we see that
(cf.\ Section~\ref{experiments}), decreasing the feasible region by
adding our constraints make it worthwhile to do so. It should also be
mentioned that there is a trade off between the number of integer
points in the essential set and running time of solving an ILP. Adding
too many constraints might make the problem hard to solve. Some
universal or atom points might be more effective for a specific
problem. However, in our experiments there is no significant
difference between different selected integer points in the essential
sets. We summarize this discussion as follows.
\begin{remark}\label{complex}
  \leavevmode
  \begin{enumerate}
  \item The main cost of Algorithm~\ref{alg:transitive} is in steps~\ref{transitive-singular} and~\ref{transitive-nonsingular} where nonlinear integer programs must be solved.
  \item The choice of essential set (as long as not too large) seems not to have a large impact.
  \end{enumerate}

\end{remark}

\subsection{Example: solving a symmetric ILP with Algorithm~1}
\label{sec:example}
In this section, we illustrate using Algorithm~1 to solve a small ILP.
Each of the subproblems discussed here is solved on commodity hardware in at most two seconds using Knitro~\cite{knitro}.  The integer linear program \emph{P1} with
cyclic symmetry group $C_{5}$ is defined~\cite{LS2018} as follows.
\begin{align*}
  \lequation{\text{minimize } x_0+x_1+x_2+x_3+x_4}\\*
  \shortintertext{ subj.\ to}
    515161x_0+18376x_1 -503804x_2 -329744x_3+  300011x_4 &\leq 59\\
    300011x_0 + 515161x_1+18376x_2 -503804x_3 -329744x_4 &\leq 59\\
    -329744x_0+300011x_1 + 515161x_2+18376x_3 -503804x_4& \leq 59\\
    -503804x_0 -329744x_1+  300011x_2+  515161x_3+18376x_4 &\leq 59\\
    18376x_0 -503804x_1 -329744x_2+  300011x_3 +515161x_4 &\leq 59\\
  \cequation{x_0+x_1+x_2+x_3+x_4 =1}\\
  \cequation{x_0, x_1,x_2,x_3,x_4 \in \Z}
\end{align*}

The group $C_{5} $ has a 1-dimensional fixed space generated by $ \mathbf{1} =(1,1,1,1,1) $ and two 2-dimensional real invariant subspaces $ \{ V_{1},U_{1} \} $ and $\{ V_{2},U_{2} \}$ where
\begin{align*}
V_{1}&=(1,\cos(\frac{-2\pi}{5}),\cos(\frac{-4\pi}{5}),\cos(\frac{-6\pi}{5}),\cos(\frac{-8\pi}{5}))\,,  \\
V_{2}&=(1,\cos(\frac{-4\pi}{5}),\cos(\frac{-8\pi}{5}),\cos(\frac{-12\pi}{5}),\cos(\frac{-16\pi}{5}))\,, \\
U_{1}&=(0, \sin(\frac{-2\pi}{5}),\sin(\frac{-4\pi}{5}),\sin(\frac{-6\pi}{5}),\sin(\frac{-8\pi}{5}))\,,\\
U_{2}&=(0, \sin(\frac{-4\pi}{5}),\sin(\frac{-8\pi}{5}),\sin(\frac{-12\pi}{5}),\sin(\frac{-16\pi}{5}))\,.
\end{align*}
Now if $ c=(c_{0},c_{1},c_{2},c_{3},c_{4}) $ is an integer solution of the above ILP  then the corresponding $ T_{i} $ in Theorem~\ref{7} are as below:
 \begin{align*}
T_{0}&=\dfrac{\langle V_{1},c \rangle}{\langle V_{1},c \rangle^{2}+\langle U_{1},c\rangle^{2}} +\dfrac{\langle V_{2},c \rangle}{\langle V_{2},c \rangle^{2}+\langle U_{2},c\rangle^{2}}\,, \\
 T_{1}&=\dfrac{\langle \sigma^{-1}(V_{1}),c \rangle}{\langle V_{1},c\rangle^{2}+\langle U_{1},c\rangle^{2}}+\dfrac{\langle \sigma^{-1}(V_{2}),c \rangle}{\langle V_{2},c \rangle^{2}+\langle U_{2},c\rangle^{2}}\,,  \\
 T_{2}&=\dfrac{\langle \sigma^{-2}(V_{1}),c \rangle}{\langle V_{1},c \rangle^{2}+\langle U_{1},c\rangle^{2}}+\dfrac{\langle \sigma^{-2}(V_{2}),c\rangle}{\langle V_{2},c \rangle^{2}+\langle U_{2},c\rangle^{2}}\,, \\
 T_{3}&=\dfrac{\langle \sigma^{-3}(V_{1}),c\rangle}{\langle V_{1},c \rangle^{2}+\langle U_{1},c\rangle^{2}}+\dfrac{\langle \sigma^{-3}(V_{2}),c \rangle}{\langle V_{2},c \rangle^{2}+\langle U_{2},c\rangle^{2}}\,, \\
 T_{4}&=\dfrac{\langle \sigma^{-4}(V_{1}),c \rangle}{\langle V_{1},c \rangle^{2}+\langle U_{1},c\rangle^{2}}+\dfrac{\langle \sigma^{-4}(V_{2}),c \rangle}{\langle V_{2},c \rangle^{2}+\langle U_{2},c\rangle^{2}}\,. 
\end{align*}
Since all variables are active we can use Algorithm~\ref{alg:transitive} to solve the problem. Note that since the problem is in layer 2, we do not need to solve an LP relaxation problem to determine layers. 

First we choose the essential set in layer 2 as  $E^{2} = \lbrace (1,1,0,0,0) \rbrace$. By adding the following nonlinear constraint,
\begin{equation*}
1+T_{0}(c)+T_{1}(c) <0,
\end{equation*}
we quickly (via Knitro) see the subproblem is infeasible.
Checking the feasibility of (1,1,0,0,0) is trivial.
By adding the following constraints (see~\eqref{in} and~\eqref{on}) we check if there is an integer solution that has a singular circulant matrix.
 \begin{align*}
 \langle V_{1},c\rangle^{2}+\langle U_{1},c\rangle^{2} &\leq  r_{1} (\langle V_{1},c\rangle^{2}+\langle U_{1},c\rangle^{2})\\
 \langle V_{2},c\rangle^{2}+\langle U_{2},c\rangle^{2} &\leq  r_{2} (\langle V_{2},c\rangle^{2}+\langle U_{2},c\rangle^{2})\\
\cequation{-2r_{3}\leq c_{0}+c_{1}+c_{2}+c_{3}+c_{4}\leq 2r_{3}}\\
\cequation{r_{1}+r_{2}+r_{3} \leq 2}
\end{align*}
The final subproblem was also solved quickly by Knitro, and no integer solution was found.

\section{Partial-Circulant Matrices}
In the previous section we assumed that the order of the cyclic permutation group of an ILP was equal to the dimension of the problem. In this section, we generalize the algorithm of the previous section for some ILP where not all variables are active.

For the remainder of this section, we denote by $x(1)$ the coordinates
of $x$ that are active in the cyclic group $C_{k}$.  We extend this notation below
to direct products of
several cyclic groups. Without loss of generality, assume $x(1)$ is
the first $k<n$ coordinates of $x$. Notice that in this case the group
action is not transitive. Indeed, the dimension of the fixed space is
$ n-k+1 $ and it is spanned by the orthogonal vectors
$e_{0} +\cdots+ e_{k-1}, e_{k},\ldots, e_{n-1}$ (using the usual basis
vectors).  Moreover, invariant subspaces of the action of $ C_{k} $ on
$ \R^{k} $ embed naturally into invariant subspaces for the action of
$ C_{k} $ on $ \R^{n} $. The following definition is a generalization
of the circulant matrix that will be useful in generalizing the
constraints of the previous section.
\begin{definition}\label{1}
For $ c\in \R^{n} $, an $ n \times k $ \emph{Partial-Circulant Matrix}, $\PCir(c)$, is a matrix where the first k rows are $\Cir(c(1))$ and the remaining rows are $\mathbf{1}^{T}$ scaled by the last $n-k$ elements of $  c$.
\end{definition}
\begin{example} Let $ c=(1,2,3,4,5) \in \Z^{5} $. Then the $ 5\times 3  $ partial circulant matrix of $ c $ takes the following form
  \begin{equation*}
  \begin{bmatrix}
1 & 3& 2\\
2 & 1& 3 \\
3 &2 &1 \\
4&4 &4 \\
5& 5& 5
\end{bmatrix}.
  \end{equation*}
\end{example}
Let $ c\in \Z^{n} $ and suppose the circulant part of $ \PCir(c) $ is invertible. Then the orbit polytope of $ c $ has dimension $ k-1 $ and any point $ x \in \R^{n} $ in $ \conv(G_{c}) $ can be written as
\begin{equation}\label{101}
  \PCir(c)_{n\times k} \lambda= x \; \; \; \text{for some}\; \; \lambda\in \R^{k}. 
\end{equation}
The rank of this partial-circulant matrix  is $ k $ and so the solution $ \lambda$ of the system~\eqref{101} can be determined by the circulant part.

Suppose the permutation $ g\in S_{n} $ factors as the product $g = h_{1} \cdot h_2 \cdots h_{d}$ of disjoint cycles $h_{j}$. Let $X_{1},\ldots, X_{d}$ be the canonically associated subspaces of $ \R^{n} $. Thus the cyclic group $H_{j}=\langle h_{j}\rangle$ permutes standard basis vectors for $ X_{j} $. Furthermore, each integer point $z \in \Z^{n}$ has a unique decomposition $  z= \oplus_{j=1}^{d}z(j)$ with $z(j) \in X_{j} \cap \Z^{n}$.
Keeping this notation, we have that the next lemma follows from the definition of  a convex combination. 

\begin{lemma}\label{4.1}
Suppose $ G=\langle g \rangle \leq S_{n}$, where $g = h_{1}\cdot h_2\cdots h_{d}$, is a product of disjoint cycles. Let $ c\in \Z^{n} $. If the integer point $  z= \oplus_{j=1}^{d}z(j)$ is in $\conv(G_{c})$, then for $j \in 1,\ldots, d$, we have $z(j) $ in the orbit polytope of $ c(j) $ under $ H_{j} $.
\end{lemma}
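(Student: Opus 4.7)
The plan is to unfold the definition of convex combinations in the orbit polytope and use the disjointness of the cycles to factor the action through the subspace decomposition. I read the statement as saying that if a point $z$ decomposed as $z = \bigoplus_{i=1}^{d} z(i)$ with $z(i) \in X_i \cap \Z^n$ lies in $\conv(G_c)$, then each component $z(i)$ lies in $\conv(H_i \cdot c(i))$.

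First, I would write an arbitrary point $x \in \conv(G_c)$ as $x = \sum_{k} \lambda_k \, g^{k}(c)$ with $\lambda_k \geq 0$ and $\sum_k \lambda_k = 1$. The key observation is that since $g = h_1 h_2 \cdots h_d$ is a product of \emph{disjoint} cycles, the factors commute and each $h_j$ fixes every coordinate outside its own support. Consequently, for every integer $k$, the action of $g^{k}$ on the subspace $X_i$ coincides with the action of $h_i^{k}$ on $X_i$, and the cycles act independently on their respective subspaces. Using the decomposition $c = \bigoplus_{i=1}^d c(i)$ with $c(i) \in X_i$, this gives
\begin{equation*}
g^{k}(c) \;=\; \bigoplus_{i=1}^{d} h_i^{k}\bigl(c(i)\bigr).
\end{equation*}

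Substituting this into the convex combination and collecting components in each $X_i$ yields
\begin{equation*}
x \;=\; \bigoplus_{i=1}^{d} \Bigl( \sum_{k} \lambda_k \, h_i^{k}\bigl(c(i)\bigr) \Bigr),
\end{equation*}
so the $X_i$-component of $x$ is $x(i) = \sum_k \lambda_k \, h_i^{k}(c(i))$. Since the same nonnegative weights $\lambda_k$ summing to $1$ appear, this exhibits $x(i)$ as a convex combination of points of the $H_i$-orbit of $c(i)$, i.e.\ $x(i) \in \conv(H_i \cdot c(i))$. Applying this to $x = z$ and projecting onto $X_i$ gives $z(i) \in \conv(H_i \cdot c(i))$, which is the desired conclusion.

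There is no real obstacle here: the only subtlety is carefully justifying that the coefficients in the inner sum (for each $i$) are the \emph{same} $\lambda_k$ as in the outer sum, which is immediate because the direct sum decomposition of $\R^n = \bigoplus_i X_i$ is preserved under scalar multiplication and addition. The proof is essentially a bookkeeping argument combining the definition of $\conv(G_c)$ with the fact that disjoint cycles act independently on complementary coordinate subspaces, which is presumably why the authors remark that the lemma "follows from the definition of the convex combination."
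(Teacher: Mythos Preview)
Your proof is correct and follows exactly the approach the paper indicates: the authors do not give a detailed argument but simply remark that the lemma ``follows from the definition of the convex combination,'' and what you have written is precisely the unfolding of that remark. Your key step---using the disjointness of the cycles to write $g^{k}(c) = \bigoplus_{i} h_i^{k}(c(i))$ and then projecting the convex combination onto each $X_i$ with the same weights $\lambda_k$---is the intended argument.
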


In this section we are only need the restricted case of $d=1$ of Lemma~\ref{4.1}: if the orbit polytope of $ c(1)=(c_{0},\ldots,c_{k-1}) $ in $\Z^{k}$ is lattice-free then $ c=(c_{0},\ldots,c_{n-1}) $ is a core point in $ \Z^{n} $. Moreover, since the sum of weights is 1 in a convex combination, if $ z\in \Z^{n} $ is in $ \conv(G_{c}) $ then $z_{k+1}=c_{k+1}$, \ldots{}, $z_{n}=c_{n}$.
Furthermore, applying inequality~\eqref{lambda},   if $ z$ does not lie in  $ \conv(G_{c}) $, then for at least  one cyclic permutation of $ z(1) $ the following constraint must be satisfied
\begin{equation}\label{h}
z_{0}T_{0}(c)+z_{1}T_{k-1}(c)+\ldots+z_{k-1}T_{1}(c) +1 < 0.
\end{equation}
Recall that the above constraint is nonlinear and non-convex with respect to $c$.

For a given symmetric (maximization) ILP where not all variables are
active in the cycle, let the optimal solution of its relaxation be
$ x^{*} $. In this case the symmetry group does not act
transitively. So, searching in $ k $ layers is not sufficient and the
optimal objective value can be in any layer less than or equal to
$ \lfloor f(x^{*})\rfloor$. Nonetheless we can define a finite set of
subproblems where we can apply the constraints developed in
Section~\ref{sec:circulant}. In the following discussion, we use
\emph{sub-layer $j$} to refer to the set of integer points whose first
$k$ coordinates sum to~$j$.

\begin{lemma}\label{AA}
  Given an ILP $P$ with a cyclic symmetry group
  $G=\langle (1,2,\ldots{},k) \rangle \leq S_{n}$ acting on the first
  $k$ coordinates,
  \begin{enumerate}
  \item  $P$ can be solved by solving $k$ subproblems created by adding the constraint
    \begin{equation}
      \label{eq:sub-layer}
      \sum_{j=0}^{k-1}x_j = i \pmod k  \quad \text{for }i\in\{1\ldots k\}\,.
    \end{equation}
  \item We can use the constraints~\eqref{lambda} in each subproblem.
  \item Each of the subproblems can be modelled by adding a single
    integer variable and a single integer linear constraint to $P$.
  \end{enumerate}
\end{lemma} 
\begin{proof}
  Let $ L=l+l^{*} $ be the layer of an integer point $z \in \Z^{n}$ in
  $P$ where $l$ is the layer of active variables and $l^{*}$ is the
  layer of non-active variables.  The point $z$ must fall into one of
  the $k$ congruence classes defined by~\eqref{eq:sub-layer}, which
  establishes the first point.

  By Lemma~\ref{iso} all congruent sub-layers $l$ have the same
  constraints for co-projective integer points (see
  Definition~\ref{equiv} and Definition~\ref{cong}). Indeed congruence
  is not needed here, only that co-projective points form equivalence
  classes for~\eqref{lambda}.

  Although the constraint~\eqref{eq:sub-layer} is nonlinear, it is
  easy to model using integer-linear constraints.  Consider the
  congruence relation between layers (see Definition~\ref{cong}). Let
  $ i=1,\ldots,k $ be the representative of each
  class.   Let $q_{i}$, be an integer solver variable.
 Constraint~\eqref{eq:sub-layer} can be reformulated as
  \begin{equation*}
    \sum_{j=0}^{k-1} x_j =  q_{i}k+i\,.
    \qedhere
  \end{equation*}
\end{proof}

Algorithm~\ref{alg:transitive}  in the previous section can be modified  in this case as Algorithm~\ref{alg:first-k} below. The difference is we should check sub-layers $l_{i}= q_{i}k+i$ where  $ q_{1},\ldots{},q_{k} $ are some new integer variables (rather than a fixed layer).
For example, if $k=3$, then all layers in $ \Z^{3} $ can be classified as
\begin{equation*}
  l_{1}= 3q_{1}+1 , \; l_{2}=3q_{2}+2 , \text{ or }  l_{3}=3q_{3}+3 .
\end{equation*}
Note that since the algorithm for this case is not searching  layer by layer, checking the feasibility of integer points in the essential set is different. In other words, we know the layers of points
we have (since  by Remark~\ref{100} the integer points in each projected essential set  are in the layers   $ 1,\ldots{},k $) but they are only representatives for (many) pre-images, and we don't know the layers of the pre-images.  If   $ z(1)=\sigma^{i}(c(1)) $ then the solution of $ \Cir(c(1))\lambda=z(1) $ is $ e_{a} $, that is:
\begin{equation*}
  \lambda_{a}=1 , \; \; \; \; \lambda_{i}=0 \; \; \; \forall i=0,\ldots{},k-1 ,\; i\neq a,
\end{equation*}
where $ a\in \{ 0,\ldots,k-1\} $. Furthermore, by Theorem~\ref{7}, to check if at least one of the  pre-images of $ z(1)$ in the projected essential set is a feasible point or not, for $ a=0 $, we can add the following constraints:
\begin{align*}
\langle \sigma^{0}(z(1)), \overline{T}(c(1)) \rangle &=1,\\
\langle \sigma^{1}(z(1)), \overline{T}(c(1)) \rangle &=0,\\
 \vdots&\\
\langle \sigma^{k-1}(z(1)), \overline{T}(c(1)) \rangle &=0.
\end{align*} 
\begin{remark}\label{feas}
Since the coordinates of integer points in the projected essential set are 0,1,-1,2,-2, another way to check the feasibility of all translates along the fixed space (which uses only linear constraints with small coefficients) is to chose one of the coordinates $ c_{i} $  as a base and write other coordinates with respect to that coordinate. 
\end{remark}

For example if $ z= (1,-2,0,0,0,0,0) \in \widehat{E}^{1}$, then by choosing $ c_{2} $ as a base we can add the following constraints to check the feasibility of all integer points $(1,-2,0,0,0,0,0)+t(1,1,1,1,1,1,1),   t \in \Z, $ in the sub-layers $ 7t+1$ :
\begin{align*}
  c_0 &= c_{2}+1, \\
  c_1 &= c_{2}-2, \\
  c_3 &= c_{2}, \\
  c_4 &= c_{2}, \\
  c_5 &= c_{2}, \\
  c_6 &= c_{2}\,. \\
\end{align*}

Algorithm~\vref{alg:first-k} generalizes
Algorithm~\ref{alg:transitive} to the case where $k<n$ coordinates are
active in the symmetry group.  For conciseness, we write $\OPT(P)$ for
the optimal objective value of maximization problem $P$. If $P$ is
infeasible, $\OPT(P)$ is defined as $-\infty$. In practice this can be
implemented by a Boolean flag, and does not require any specialized
arithmetic.

\begin{algorithm}[htb]
  \caption{Maximize with first $k$ variables active.}
  \label{alg:first-k}
\begin{algorithmic}[1]
  \Require Maximization ILP $P$ with bounded relaxation, cyclic symmetry group acting on the first $k$ coordinates, and objective function $f$.
  \Ensure Optimum objective value, or $-\infty$ if $P$ is infeasible.
  \State Construct and solve a subproblem $P_0$ of type $Q_2$ by adding constraints~\eqref{in} and~\eqref{on} to $P$.
  Set $f^*$ to $\OPT(P_0)$.
  \label{first-k:singular}
 \For{$i= 1 \ldots k$}
 \State Construct a subproblem $L_i$ by adding  constraint $\sum_{j=0}^{k-1}x_{j}=q_{i}k+i$ where $q_i$ is a new integer  variable.
 \State Choose projected essential set $\widehat{E}^{i}=\{z^{1},\ldots{},z^{m_{i}}\}$.
 \For{$ j=1,\ldots{},m_{i}$}
 \State Construct and solve subproblem $P^{j}_{i} $  of type $ Q_{3} $ by adding  constraints described in Remark~\ref{feas}  to $L_i$ for point in $z^j  \in \widehat{E}^{i}$
 \label{first-k:feas}
 \State $f^* \leftarrow \max \{f^*, \OPT(P^j_i)\}$.
 \EndFor
 \State Construct a subproblem $ P_{i} $ of type $ Q_{1} $ by adding  to $L_i$ constraint~\eqref{smooth} for all  $ m=0,\ldots,\left\lceil (n-1)/{2}\right\rceil $ and constraints~\eqref{lambda} for each $ z^{j} \in \widehat{E}^{i} $.
 \label{first-k:nonsingular}
 \State  $ f^{*} \leftarrow \max \{ f^*, \OPT(P_i)\}.$
 \EndFor
\State \Return $f^*$
\end{algorithmic}
\end{algorithm}

As discussed in Remark~\ref{complex}, giving precise complexity bounds
for Algorithm~\ref{alg:first-k} is difficult. But it is worthwhile to
compare the complexity of Algorithm~\ref{alg:transitive} and
Algorithm~\ref{alg:first-k}. In step 1 of of both algorithms the same
constraints are used but the difference is that, in the second
algorithm there are some non-active coordinates. So the subproblem in
this case is harder to solve since constraints are effective on a
small dimension of the problem. The type of constraints in
step~\ref{first-k:nonsingular} of Algorithm~\ref{alg:first-k} and
step~\ref{transitive-nonsingular} of Algorithm~\ref{alg:transitive} is
the same but there are two differences: 1) less coordinates are again
active in the second algorithm, and 2) in the first algorithm in each
subproblem is in a single layer, but in the second algorithm in each
subproblem is solved over an equivalence class of sub-layers. Both of
these factors make the type $Q_1$ subproblems harder to solve in
Algorithm~\ref{alg:first-k}. The other important fact is that, in the
first algorithm since we are searching layer by layer, the first
integer solution (if there is any) is an optimal solution. But in the
second algorithm since all coordinates are not active if an integer
solution is found it might not be optimal.  On the other hand, when
using Algorithm~\ref{alg:first-k} for feasibility problems the
subproblems can also be solved as feasibility problems, using local
solvers if desired.  Finally, checking the feasibility of universal
and atom core points in these two algorithms is done differently. In
the first algorithm in each subproblem, it is just testing an integer
point against a set of linear inequalities. But in the second
algorithm since not all coordinates are active, in each subproblem an
ILP should be solved after adding constraints described in
Remark~\ref{feas}.

\section{New Constraints for Direct Products of Cyclic Groups}\label{EXT1}

Lemma~\ref{AA} shows that  Algorithm~\ref{alg:first-k} can be used for a cyclic subgroup of the symmetric group of an ILP. In some cases the cyclic subgroup is small and Algorithms 1 and 2 are not very practical. In this section we generalize Algorithm~\ref{alg:first-k} for direct products of cyclic groups. Like Algorithm~\ref{alg:first-k}, the algorithm in this section also does not search layer by layer in the feasible region. Instead, we search in all equivalence classes of sub-layers.

Recall that the Cartesian product of the $ d $ sets $X_{1},\ldots, X_{d}$ is
\begin{align*}
X_{1}\times \ldots{}\times X_{d}= \prod_{i=1}^{d} X_{i}=\{ (x_{1},\ldots{},x_{d}): \; x_{i}\in X_{i} \; \text{for every } i \in \{1,\ldots{},d\} \} .
\end{align*}
\begin{definition}
Let $ G_{i} , i=1,\ldots{},d$ be some finite groups. The \emph{direct product} $ G_{1}\times \ldots{} \times G_{d} $
is defined as follows.
\begin{enumerate}
\item The underlying set is $ G_{1} \times \ldots{} \times G_{d} $. 
\item Multiplication is defined coordinate-wise:
\begin{equation*}
(g_{1},\ldots{},g_{d})\cdot (g_{1}^{\prime},\ldots{},g_{d}^{\prime}) =  (g_{1}.g^{\prime}_{1},\ldots{},g_{d}.g_{d}^{\prime}).
\end{equation*}
\item The identity element of this group is defined as $(e_{1},\ldots,e_{d})$ where $ e_{j} $ is the identity element of $ G_{j} $ for $ j=1,\cdots ,d. $
\end{enumerate}
\end{definition}
It is  routine to check that with the above operation, $\prod_{i=1}^{d} G_{i} $ is a group.
  Consider a product $g = h_{1}\cdot h_2\cdots h_{d}$ of disjoint cycles in $ S_{n} $, and let $H_{i} = \langle h_{i}\rangle $.
\begin{lemma}\label{direct}
The direct product $G = H_{1}\times \cdots\times H_{d}$ is isomorphic to $G^{\prime} = \langle h_{1},\ldots,h_{d}\rangle$.
\end{lemma}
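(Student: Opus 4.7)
The plan is to exhibit an explicit isomorphism $\varphi: G \to G'$ and verify the three standard properties (homomorphism, surjectivity, injectivity), all of which ultimately reduce to the fact that disjoint cycles commute and act on disjoint sets of coordinates.

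First I would define $\varphi: H_1 \times \cdots \times H_d \to G'$ by
\[
\varphi\bigl(h_1^{a_1}, h_2^{a_2}, \ldots, h_d^{a_d}\bigr) = h_1^{a_1} h_2^{a_2} \cdots h_d^{a_d}.
\]
To see this is well-defined on group elements, I would note that each $H_i$ is cyclic of order $|h_i|$, so picking coset representatives $0 \le a_i < |h_i|$ gives a unique expression for each tuple. The homomorphism property then follows from the key fact that disjoint cycles commute in $S_n$: for any $i \neq j$, $h_i h_j = h_j h_i$, so we may freely reorder the factors when multiplying two images, showing $\varphi(xy) = \varphi(x)\varphi(y)$.

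Surjectivity is essentially by definition, since every element of $G' = \langle h_1, \ldots, h_d \rangle$ can be written as a word in the $h_i$, and using commutativity such a word can be collected into the form $h_1^{a_1} \cdots h_d^{a_d}$, which is the image of $(h_1^{a_1}, \ldots, h_d^{a_d})$.

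The main step — and arguably the one requiring the disjointness hypothesis in a non-trivial way — is injectivity. Suppose $\varphi(h_1^{a_1},\ldots,h_d^{a_d}) = h_1^{a_1}\cdots h_d^{a_d}$ equals the identity permutation. Let $S_i \subseteq \{1,\ldots,n\}$ denote the support of $h_i$; by hypothesis the $S_i$ are pairwise disjoint, and $h_j^{a_j}$ fixes every point outside $S_j$. Restricting the product to any coordinate $x \in S_i$, all factors except $h_i^{a_i}$ act trivially, so $h_i^{a_i}(x) = x$ for every $x \in S_i$. This forces $h_i^{a_i}$ to be the identity in $S_n$, i.e.\ the identity of $H_i$. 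Hence the tuple is the identity of $G$, giving $\ker \varphi$ trivial, and $\varphi$ is an isomorphism. I do not expect any serious technical obstacle here; the whole argument is a standard internal-direct-product check, and the only subtlety is being careful to invoke disjointness of the supports in the injectivity step rather than just commutativity.
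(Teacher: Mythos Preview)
Your proof is correct and follows essentially the same approach as the paper: both define the map $(g_1,\ldots,g_d)\mapsto g_1\cdots g_d$ and use commutativity of disjoint cycles for the homomorphism property. Your treatment is in fact more thorough than the paper's, which simply asserts that bijectivity is ``very straightforward to check'' from disjointness, whereas you spell out the injectivity argument via restriction to the disjoint supports~$S_i$.
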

\begin{proof}
Let $g_i = h_{i}^{t_i}$  denote a general element of the cyclic group $H_{i}$.  Consider the following map between $ G $ and $ G^{\prime} $ 
\begin{align*}
  \phi: G &\rightarrow G^{\prime}\\
(g_{1},\ldots{},g_{d}) &\rightarrow g_{1}g_{2} \cdots g_{d},
\end{align*}
It is straightforward to check $ \phi $ is a bijection since the cycles are disjoint ($g_{i}$'s commute since the $h_{i}$'s are disjoint). Also we have
\begin{align*}
  \phi((g_{1},\ldots{},g_{d})\cdot (g_{1}^{\prime},\ldots{},g_{d}^{\prime}))=\phi(g_{1} g^{\prime}_{1},\ldots{},g_{d} g_{d}^{\prime})= g_{1} g^{\prime}_{1} \ldots{} g_{d}g_{d}^{\prime}= \\
  g_{1} \ldots{} g_{d} g_{1}^{\prime} \ldots{} g_{d}^{\prime} =  \phi(g_{1},\ldots{},g_{d})\phi(g_{1}^{\prime},\ldots{},g_{d}^{\prime}).
\end{align*}
So $ \phi $ is a group isomorphism. Note that the second to last equality holds since the cycles are disjoint.
\end{proof}
\begin{example}
The permutation group $ G=\langle  (1,2,3,4),(5,6,7) \rangle $ on the set $\{1,2,3,4,5,6,7\}$ is isomorphic to the direct  product of two cyclic groups $ H_{1}=\langle (1,2,3,4)\rangle $ and $ H_{2}=\langle (5,6,7)\rangle $
\end{example}
In this section we denote by $ z(i) $ the coordinates of $ z $ that are active  in the  i-th cyclic group in the direct product $  H_{1} \times \ldots{} \times H_{d}$.

The following theorem states that if a permutation group $ G\leq S_{n} $ is a direct product of other permutation groups $ G^{i} $, (that is, permutation groups $G^{i}$ acting on disjoint subsets of coordinates in the usual way), then the core set of $  G$ is also a Cartesian product. 
\begin{theorem}\emph{\cite[Theorem 8]{5}}\label{cart}
  Let $ G= \prod_{i=1}^{d} G^i $, $ G^i\leq S_{n}$. Then
  \begin{equation*}
    \core(G) =\prod_{i=1}^{d}\core(G^i).
  \end{equation*}
\end{theorem}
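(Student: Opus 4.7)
The plan is to use the product structure of $G$ to factorize the orbit polytope as a Cartesian product, and then reduce both the vertex set and the set of integer points to componentwise products. First, I would observe that since each $G_i$ acts only on the coordinate block $X_i$ and the $X_i$ are pairwise disjoint, the orbit of $z = z(1)\oplus \cdots \oplus z(d)$ splits as
\begin{equation*}
G \cdot z \;=\; \bigtimes_{i=1}^d\bigl(G_i \cdot z(i)\bigr),
\end{equation*}
because each group factor independently permutes coordinates in its own block and fixes the rest. Since convex hull commutes with Cartesian product for finite sets, this immediately yields
\begin{equation*}
\conv(G_z) \;=\; \bigtimes_{i=1}^d \conv\bigl((G_i)_{z(i)}\bigr).
\end{equation*}

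With this factorization in hand, the next step is to invoke two standard facts about a polytope product $P = P_1 \times \cdots \times P_d$: the vertex set decomposes as $\vt(P) = \bigtimes_i \vt(P_i)$, and, because the coordinate blocks are disjoint, its lattice points decompose as $P \cap \Z^n = \bigtimes_i (P_i \cap \Z^{n_i})$. Applying both to $\conv(G_z)$, the lattice-free condition $\conv(G_z)\cap \Z^n = \vt(\conv(G_z))$ becomes an equality of nonempty Cartesian products, and such an equality holds iff it holds componentwise. Hence $z \in \core(G)$ if and only if $z(i) \in \core(G_i)$ for each $i$, which is exactly the claimed set equality.

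The main subtlety I expect is justifying the vertex identity $\vt(P_1 \times \cdots \times P_d) = \bigtimes_i \vt(P_i)$. I would establish it through separable linear functionals: a tuple $(v_1,\ldots,v_d)$ maximizes $\sum_i \langle a_i, x(i)\rangle$ over the product if and only if each $v_i$ maximizes $\langle a_i,\cdot\rangle$ over $P_i$, so the vertices of the product are precisely tuples of vertices of the factors. Alternatively one can appeal to the standard fact that the face lattice of a polytope product is the product of the face lattices of the factors. Once this identity is in place, the rest of the argument is a purely set-theoretic unpacking of the equality of Cartesian products, with no further geometric input required.
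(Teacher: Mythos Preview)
Your proposal is correct and follows essentially the same route as the paper: both arguments hinge on the factorization $\conv(G_z) = \bigtimes_{i=1}^{d} \conv\bigl((G_i)_{z(i)}\bigr)$, from which the core-point equivalence is read off. The paper's proof simply states this identity and declares the theorem immediate, whereas you have spelled out the two ingredients (vertex-set and lattice-point decompositions of a product polytope) that make ``immediate'' precise; your version is a faithful expansion of the same idea rather than a different approach.
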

\begin{proof}
The product structure of $  G$ induces a decomposition of $ \R^{n} $ into a Cartesian product of pairwise orthogonal coordinate subspaces $\oplus_{i=1}^{d} X_{i}= \R^{n}$. Thus, we can write every point $ z\in \R^{n} $ as $z =\oplus_{i=1}^{d} z(i)$. The claim of the theorem follows immediately from $ \conv(G_{z})= \prod_{i=1}^{d} \conv(G^i_{z(i)}) $.
\end{proof}
We are concerned with various subgroups of the direct product  $\prod_{j=1}^{d}H_{j} $, where the cyclic groups $ H_{j}=\langle h_{j}\rangle $ are generated by disjoint cycles $ h_{1},\ldots,h_{d} $ in $ S_{n} $. Let $ k_{j} $ denote the length (period) of
the cycle $ h_{j} $. Thus, if  $ g=h_{1}\cdots h_{d} $ is the complete factorization of the permutation $ g \in S_{n} $ into disjoint (possibly trivial) cycles, then $ k_{1}+ \cdots k_{d}=n $. 

Note that we can use the same $\widehat{T}(c) $ as in Lemma~\ref{T} for each cycle of the direct product groups. In other words, let  $ c =\bigoplus_{i=1}^{d} c(i)\in \Z^{n} $. When required, we will suppose that the vector $ c $ has the property that the matrices $\Cir(c(j))$ are invertible, $1\leq j\leq d$.  Lemma~\ref{T}  then tells us the corresponding inverse matrices  are  $\Cir( \widehat{T}(c(j)))$.
 Then  a point $ x =\bigoplus_{i=1}^{d} x(i)\in \R^{n}$ is in $ \conv(G_{c}) $   if and only if
\begin{align*}
 x(1)=\Cir(c(1))\lambda^{1} &\Rightarrow \lambda^{1}=\Cir(\widehat{T}(c(1))) x(1)\,,
\\
 &\vdots \\
 x(d)=\Cir(c(d))\lambda^{d} &\Rightarrow \lambda^{d}=\Cir(\widehat{T}(c(d))) x(d)\,.
\end{align*}
Furthermore,  by Theorem~\ref{cart} we can apply our new constraints on each subspace. 

 We  denote by $ l^{j}_{i}$ the sub-layer of  active variables of an integer point in the $ j$-th cyclic group and  in the  $ i$-th equivalence class, and  denote by  $ \widehat{E}^j_i$ the  corresponding projected essential set of each sub-layer $ l_{i}^{j}. $

Similarly to the previous section, all equivalence classes of all sub-layers must be checked.   By Lemma~\ref{iso}, Remark~\ref{100} and Lemma~\ref{direct}, in each cyclic group $H_{i} $ we need to search in $ k_{i}$ sub-layers. Then there are $ k_{1}\times \ldots{} \times k_{d}$ possibilities for sub-layers of an integer point. 
For example suppose $ G= \langle  (1,2,3)\rangle \times \langle(4,5)\rangle \leq S_{5}$. Then we have decomposition $ \Z^{n}=X_{1} \times X_{2} $ where $ z(1)=(z_{0},z_{1},z_{2}) $  and $ z(2)=(z_{3},z_{4}) $.  So we have the following possibilities for sub-layers  of a feasible solution $ z $
\begin{align*}
  l^{1}_{1}&=3q_{1}+1 , \; \; l^{2}_{1}=2q_{2}+1,\\
  l^{1}_{2}&=3q_{1}+2 , \; \; l^{2}_{1}=2q_{2}+1,\\
  l^{1}_{3}&=3q_{1}+3 , \; \; l^{2}_{1}=2q_{2}+1,\\
  l^{1}_{1}&=3q_{1}+1 , \; \; l^{2}_{2}=2q_{2}+2,\\
  l^{1}_{2}&=3q_{1}+2 , \; \; l^{2}_{2}=2q_{2}+2,\\
  l^{1}_{3}&=3q_{1}+3 , \; \; l^{2}_{2}=2q_{2}+2,
\end{align*}
where $ q_{1} $ and $ q_{2} $ are arbitrary integers.

Consider a cycle $ h_{j} $. Since its length is $ k_{j} $, a complete set of residues modulo $ k_{j} $ is the set $ [k_{j}]:=\{1,\ldots,k_{j}\} $. We require their Cartesian product 
\begin{equation*}
 K=[k_{1}]\times \ldots{}\times [k_{d}]=\{(t_{1},\ldots{},t_{d}): \; t_{j} \in [k_{j}] \; 1\leq j \leq d \}.  
\end{equation*}
For each element $ (t_{1},\ldots{},t_{d}) $ of $K \subset \Z^n$, a subproblem of type $ Q_{1} $ is solved, where $ t_{j},\;  j=1,\ldots{},d$, is the sub-layer of  active variables of cycle $ h_{j}. $
\begin{remark}\label{comb}
  In the direct product group $G= \prod_{i=1}^{d} H_{i}$, each
  sub-layer has its own projected essential set. In step~\ref{first-k:feas} of
  Algorithm~\ref{alg:first-k} the feasibility of pre-images of integer points of the
  projected essential sets is checked. In the direct product case we can check the
  feasibility of these pre-images corresponding to each cycle
  separately. If
  \begin{math}
    s_j := \sum_{i=1}^{k_j} |\widehat{E}^j_i|, 
  \end{math}
  then $\sum_{j=1}^ds_j$ subproblems must be solved.

  On the other hand if adding constraints for one cycle is not enough,
  we can check for simultaneous feasibility.
  Let $(t_1, \dots, t_d) \in K$ be a vector of residues (where $K$ is
  defined above).  Given points $z(j) \in \widehat{E}^j_{t_j}$, $j=1\ldots d$,
  we can check simultaneous feasibility of $ z(1),\ldots,z(d)$. In
  this case, $\Pi_{j=1}^d s_{j}$ subproblems must be solved.
\end{remark}

For  example if $ G=C_{5} \times C_{8} $ and a universal core point in the projected essential set $ \widehat{E}^{1}_{2} $ is $ (1,1,0,0,0) $, by Remark~\ref{feas}, we add the following constraints to the problem: 
\begin{align*}
 x_{1}&=x_{0}\,,\\
 x_{2}&=x_{0}-1\,, \\
 x_{3}&=x_{0}-1\,, \\
 x_{4}&=x_{0}-1\,.
\end{align*} 

We can also check all possible combinations of integer points in the
projected essential sets. For example if
$ G= \langle (1,2,3,4)\rangle \times \langle (5,6,7)\rangle \leq S_{7}$ ,
for $ l_{2}^{1} $ and $ l_{1}^{2} $ let
$ \widehat{E}_{2}^{1} =\{ (1,1,0,0),(1,0,1,0)\}$ and
$ \widehat{E}_{1}^{2} =\{ (1,0,0), (2,-1,0)\}$. Then all possible
combinations of $ \widehat{E}_{2}^{1} $ and $\widehat{E}_{1}^{2}$ are
as below:
\begin{equation*}
 (1,1,0,0,1,0,0), (1,1,0,0,2,-1,0),(1,0,1,0,1,0,0),(1,0,1,0,2,-1,0) .
\end{equation*}

Now we can generalize Algorithm~\ref{alg:first-k} for direct product groups as shown
in Algorithm~\vref{alg:product}. For simplicity we present the
version which tests the essential points individually for feasibility; the
modification of the loop at step~\ref{loop:feas} for simultaneous
testing is straightforward.

\begin{algorithm}[htb]
  \caption{Maximization for ILPs with direct product cyclic symmetry}
  \label{alg:product}
\begin{algorithmic}[1]
  \Require{maximization ILP $ P $ with symmetry group $G=\Pi_{j=1}^dH_j$, bounded relaxation, objective function $f$.}
  \Ensure{The optimal objective value, or $-\infty$ if $P$ is infeasible.}
  \State  $f^* \leftarrow -\infty$
  \State Let $k_j$ be the order of cyclic group $H_j$.
  \For{$ t = (t_{1},\ldots{},t_{d})\in [k_{1}]\times \ldots{} \times [k_{d}] $}
  \State Choose projected essential sets  $\widehat{E}_t = \bigcup \{\, \widehat{E}^1_{t_1},\ldots{},\widehat{E}^d_{t_d} \,\}$.
  \For{$z \in \widehat{E}_t$} \label{loop:feas}
  \State Construct and solve subproblem $ P^{z}_{t} $ of type  $Q_{3}$  by adding constraints described in Remark~\ref{feas} to $ P $ for point $z$.\label{prod:q3}
  \State $f^*\leftarrow \max \{f^*, \OPT(P^z_t) \}$
  \EndFor
  \Statex\quad Construct and solve a subproblem $P_{t}$  by adding to~$P$:
  \For{$a=1,\ldots{},d,$}
  \If {$t_{a}=k_{a}$}
  \State Constraints~\eqref{in} and~\eqref{on}. \Comment{$Q_{2}$}.
  \Else 
  \State Constraints~\eqref{smooth} for $0 \leq m \leq \left\lceil (n-1)/2\right\rceil$ and
  constraints~\eqref{lambda} for each $z$ in $\widehat{E}_t$. \Comment{$Q_{1}$}.
    \EndIf
    \EndFor
    \State $f^{*} \leftarrow \max\{ f^*, \OPT(P_t)\}$
    \EndFor
\State \Return $f^*$
\end{algorithmic}
\end{algorithm}

\begin{remark}
  Compared to Algorithm~\ref{alg:first-k}, both the number of
  subproblems and their difficulty are potential bottlenecks in
  Algorithm~\ref{alg:product}. While the steps are similar to the
  Algorithm~\ref{alg:first-k}, the running time of each subproblem is
  longer. In particular in each subproblem of step~\ref{prod:q3} we
  either have constraints acting on a small set of
  coordinates (if we check each subgroup individually), or a very
  large number of subproblems (if we check for simultaneous
  feasibility).
\end{remark}
\section{New Constraints For Permutation Groups}
In this section we use Algorithms 3, 2 or 1   for a symmetric ILP with some permutation group as its symmetry group. First we define subdirect product groups and then classify the permutation groups with respect to their  generators.

\begin{definition}
Let $ H_{1} $ and $ H_{2} $ be groups. A \emph{subdirect product} of $ H_{1} $ and $ H_{2} $ is a subgroup $ H $ of the external direct product $H_{1} \times H_{2}$ such that the projection from $H$ to either direct factor is surjective. In other words, if $p_{1}:H_{1} \times H_{2} \to H_{1} $ is given by $(h_{1},h_{2}) \mapsto h_{1} $ and $p_{2}:H_{1} \times H_{2} \to H_{2} $ is given by $(h_{1},h_{2}) \mapsto h_{2}$, then $ p_{1}(H) = H_{1} $ and $p_{2}(H) = H_{2}$.
Subdirect products of more than two groups are obtained naturally by iterating this construction.
\end{definition}
\begin{example}
The group $\langle  (1,2)(3,4,5,6) \rangle $ is  a subdirect product of $ \langle (1,2) \rangle $ and $ \langle (3,4,5,6) \rangle $ and is a subgroup of the direct product $\langle (1,2) \rangle \times \langle (3,4,5,6) \rangle$.
\end{example}

\begin{remark}\label{sub}
Suppose $ g\in G \leq S_{n} $ is factored as a product of disjoint cycles. These cycles themselves may or may not be elements of $  G$.
But  Lemma~\ref{4.1}  shows that for exploring feasible integer points of a symmetric polyhedron considering some of the cycles is enough. In other words, we can make our constraints for a few cycles and keep other variables non-active. 
\end{remark}
Note that our constraints are still valid for a subdirect product of disjoint cycles, since in each sub-problem we remove universal or atom points (by generating constraints from their projections) from the orbit polytope of active variables of each cycle. So Algorithms 2 and 3 work for this case as well.

Now we  classify permutation groups with respect to their generators.

\paragraph{Disjoint Cycles}

If $ G $ is a group generated by disjoint cycles $ h_{1},\ldots,h_{d} $, then $ G $ is the direct product of $ H_{1},..,H_{d} $ (Lemma~\ref{direct}). In this case  Algorithm~\ref{alg:product} can be applied.
\begin{example}
The group $ G=\langle (1,2,3,4),(6,8,9),(5,7,12,11) \rangle $ is the direct product of the three cyclic groups $ H_{1}=\langle (1,2,3,4) \rangle $,  $ H_{2}=\langle (6,8,9) \rangle $, $ H_{3}=\langle (5,7,12,11) \rangle. $
\end{example}

\paragraph{Product of disjoint cycles}

If the given set of generators of a group $G$ is a single permutation which is the product of two or more disjoint cycles, then $ G $ is a subdirect product of the corresponding cyclic groups.  In this case by Remark~\ref{sub}, Algorithm~\ref{alg:product} can be applied.
\begin{example}
The subdirect product group $ G=\langle  (1,2,5,3)(6,10,11)(9,7,12,8)  \rangle $ is a subgroup of the direct product of three cyclic groups $ H_{1}=\langle (1,2,5,3) \rangle $,  $ H_{2}=\langle (6,10,11) \rangle $, $ H_{3}=\langle (9,7,12,8) \rangle. $
\end{example}

\paragraph{Combination of two of the above cases}
If the generators of $G$ are products of various numbers of disjoint cycles, then Algorithm~\ref{alg:product} can be applied.
\begin{example}
Let $ G=\langle(1,2,3,4)(5,6,12,13), (7,8,9,10)\rangle$, then $ G $ is a subgroup of the direct product of three cycles $ H_{1}=\langle (1,2,3,4)\rangle $, $ H_{2}=\langle (5,6,12,13)\rangle $, $H_{3}=\langle (7,8,9,10)\rangle $. 
\end{example}

\paragraph{Non-disjoint cycles}
If  the given set of generators of $ G $ are not disjoint cycles, we find a subgroup of $ G $ where all generators are disjoint.  Then it falls into one of the previous three cases. Note that for finding the biggest cyclic subgroup we can find representatives of the conjugacy classes (e.g.\ by using GAP~\cite{GAP4}) and choose the biggest cycle.
\begin{example}
Let $ G= \langle (1,2,3,4,5),(7,5,8,10,11)(12,13,2,14)   \rangle $ then $ H_{1}=\langle (1,14,12,13,2,3,4,8,10,11,7,5)  \rangle  $ is a cyclic subgroup and since 9 and 6 are fixed, Algorithm~\ref{alg:first-k} can be applied. 
\end{example}

\section{Computational Experiments}

\label{experiments}

In order to test the efficiency of our algorithms, in this section we create some symmetric integer linear programs that are hard to solve with standard solvers. For this purpose, we can use the orbit polytopes of core points. As the orbit polytope of a core point contains no integer points aside from the vertices, if we can cut off these vertices then the integer program corresponding to this polytope will be infeasible. 

Infeasible problems are typically hard for branch-and-bound algorithms
because there is no chance of early success.  The goal of the
\emph{integer feasibility} problems is to find an integer point in a
polyhedron $P$ or decide that no such point exists.  Aaarden and
Lenstra presented some difficult integer feasibility
problems~\cite{AL04,GZ02} whose relaxations are simplices. Our
techniques are not suitable for the examples of~\cite[Table 1]{AL04}
because these simplices lack symmetries in the sense of
Remark~\ref{2}. Symmetric instances are interesting in their own right
because equivalent partial solutions can blow up the
branch-and-bound-tree (cf.~\cite{PR2019}). In order to construct
symmetric examples that are still integer infeasible, we start with
lattice-free symmetric simplices and cut off the (integer) vertices; a
similar technique is used in~\cite{2}.

The convex hull of the orbit of a core point under
a cyclic group with an invertible $ n\times n $ circulant matrix is a
simplex with dimension $ n-1$.
We say an integer point has a \emph{globally minimal projection} (with respect to some invariant subspace $V$) if
\begin{equation*}
  \Vert z\vert_{V} \Vert \leq \Vert z^{\prime}\vert_{V} \Vert \text{ for all } z^{\prime} \in \mathrm{aff}( G_{z} ) \bigcap \Z^{n}.
\end{equation*}
For a primitive group the following
theorem shows that the corresponding orbit polytope is often a
simplex.
\begin{theorem}\emph{ \cite[Theorem 5.37]{2}}
  \label{thm:5.37}
Let $ G\leq S_{n} $ be primitive and let $ V\leq \R^{n} $ be a rational invariant subspace. If $ e_{0}=(1,0,0,\ldots{},0)  $ has globally minimal projection onto $ V $,  then there are infinitely many core points in layer one. The corresponding orbit polytopes are simplices.
\end{theorem}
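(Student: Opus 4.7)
The plan is to exploit primitivity of $G$ together with the lattice structure on $V$ to produce an infinite family of lattice-free orbit polytopes that are simplices. The three ingredients are: the fact that transitivity forces the orbit of $e_1$ to be the set $\{e_1,\ldots,e_n\}$; the decomposition of $\R^n$ into $G$-invariant rational subspaces (so that $V$ admits a well-defined $G$-equivariant orthogonal complement $V^{\perp}$ meeting $\Z^n$ in a lattice); and a projection argument which turns the minimality hypothesis into the lattice-free condition.

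First I would dispose of the simplex assertion. Since $G$ acts transitively on coordinates, $\conv(Ge_1)$ is the standard $(n-1)$-simplex $\Delta_{n-1}$, so the base case is automatic. For a candidate core point $z=e_1+w$ constructed below, with $w\in V^{\perp}\cap\mathbf{1}^{\perp}\cap\Z^n$, the map $x\mapsto x+w$ is $G$-equivariant modulo the fixed space, so $\conv(Gz)$ is an affine image of $\Delta_{n-1}$ of full dimension $n-1$, hence again a simplex. The nondegeneracy uses primitivity: a non-trivial stabilizer of $z$ would force $\pi_V(z)$ into a proper $G$-invariant subspace of $V$, contradicting the global minimality of $\pi_V(e_1)$.

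Next, to produce the infinite family, I would take $w$ ranging over the lattice $\Lambda:=V^{\perp}\cap\mathbf{1}^{\perp}\cap\Z^n$ and set $z=e_1+w$. Then $z$ lies in layer one and $\pi_V(z)=\pi_V(e_1)$. I claim $z$ is a core point. Indeed, let $y\in\conv(Gz)\cap\Z^n$. Since $w\in V^{\perp}$, projecting onto $V$ gives $\pi_V(y)\in\conv(G\pi_V(e_1))$. By the global minimality of $\pi_V(e_1)$ among integer projections in layer one, every lattice point $y'$ with $\pi_V(y')$ strictly inside this convex hull would have $\|\pi_V(y')\|<\|\pi_V(e_1)\|$, contradicting minimality. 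Hence $\pi_V(y)$ must be an extreme point, equal to $g\pi_V(e_1)$ for some $g\in G$. Then $y-gz\in V^{\perp}\cap\conv(Gz-gz)$, and combining with the lattice condition and layer one constraint forces $y=gz$, i.e.\ $y$ is a vertex.

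The main obstacle is showing that the translation lattice $\Lambda$ has positive rank, so that genuinely infinitely many $G$-orbits arise. Rationality of $V$ ensures $V^{\perp}\cap\Z^n$ has rank $n-\dim V$; intersecting with $\mathbf{1}^{\perp}$ costs at most one rank. If $V\subsetneq\mathbf{1}^{\perp}$ then $\Lambda$ has rank at least one, yielding infinitely many distinct $w$. The minimality hypothesis rules out the degenerate case $V=\mathbf{1}^{\perp}$ (otherwise the nonzero vector $e_1-e_2$ would project to itself, contradicting that $e_1$ has minimal projection). Finally, two translates $e_1+w$ and $e_1+w'$ are $G$-inequivalent whenever $w-w'\notin\{gz-z:g\in G\}$, a cofinite condition on $\Lambda$, so the constructed family contains infinitely many equivalence classes of core points.
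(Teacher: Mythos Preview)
The paper does not itself prove this theorem; it is quoted verbatim from \cite[Theorem 5.37]{2} and used only to justify that certain orbit polytopes are simplices for the purpose of constructing test instances. There is therefore no ``paper's own proof'' to compare against, and your outline has to be judged on its own merits.

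Your overall strategy---translate $e_{1}$ by lattice vectors $w\in V^{\perp}\cap\mathbf{1}^{\perp}\cap\Z^{n}$, preserve the minimal projection onto $V$, and deduce lattice-freeness from minimality---is the right shape. But there is a genuine gap in both the simplex claim and the core-point deduction, and the two are linked.

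The assertion that ``$x\mapsto x+w$ is $G$-equivariant modulo the fixed space'' is false unless $w$ itself is $G$-fixed. For $g\in G$ one has $g(e_{1}+w)=ge_{1}+gw$, not $ge_{1}+w$; since your $w$ lies in $\mathbf{1}^{\perp}$ (hence outside $\Fix(G)$ unless $w=0$), the orbit $G(e_{1}+w)$ is not a translate of $\{e_{1},\dots,e_{n}\}$. In particular its size equals $[G:\mathrm{Stab}_{G}(e_{1})\cap\mathrm{Stab}_{G}(w)]$, which for generic $w$ is $|G|\gg n$, so the orbit polytope cannot be an $(n{-}1)$-simplex. The same issue breaks the final step of your lattice-freeness argument: after reducing to $\pi_{V}(y)=\pi_{V}(z)$ you only obtain $y\in\conv(Hz)$ for $H=\mathrm{Stab}_{G}(\pi_{V}(e_{1}))$, and with arbitrary $w$ there is no reason this smaller orbit polytope is lattice-free.

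Both problems are repaired by the additional restriction $w\in\Fix(\mathrm{Stab}_{G}(e_{1}))$. Primitivity then forces $\mathrm{Stab}_{G}(\pi_{V}(e_{1}))=\mathrm{Stab}_{G}(e_{1})$ (the stabiliser of $e_{1}$ is maximal, and $\pi_{V}(e_{1})\neq 0$), whence $H$ fixes $w$, the orbit has exactly $n$ elements, and $y\in\conv(Hz)=\{z\}$. What remains---and is the actual work in the cited source---is to show that the lattice $V^{\perp}\cap\mathbf{1}^{\perp}\cap\Fix(\mathrm{Stab}_{G}(e_{1}))\cap\Z^{n}$ has positive rank and that the resulting orbits are affinely independent. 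Your current draft does not address this restriction or the rank computation it entails.
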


Based on Theorem~\ref{thm:5.37} the core point used to define the
instances is calculated as $p+\sigma w$ for some universal core point
$p$ and direction $w$ orthogonal to the fixed space.  We refer to the
scalar $\sigma$ as the \emph{scale factor} or just \emph{scale}.  The
groups used for these experiments are the primitive groups with GAP
identifiers $(15, 2)$, $(21, 2)$, $(45, 1)$ (all three of them have
rational invariant subspaces).  Table~\ref{Table} gives more details
about symmetry groups of these instances.

Preliminary experiments indicated that for these problems, there was
little benefit to using Algorithm 3 instead of Algorithm 2.  In
Section~\ref{sec:example} we presented an example where the
subproblems were solved using Knitro~\cite{knitro}. While Knitro
performs well in our setting, we wanted to ensure our results were
replicable with open source software. For this reason we have
implemented Algorithm 2 using
PySCIPOpt~\cite{PySCIPOpt}.  In order to simplify the nonlinear
subproblems $Q_1$, we have only implemented the case of single element
essential sets, using \eqref{6}. 

We use Algorithm 2, CPLEX, Gurobi and SCIP~\cite{SCIP} to solve P2,
P3, and P4 and compare the results.  We also checked that as expected
the analogous feasible instances (simplices) are generally easy for
all solvers, and that Algorithm~\ref{alg:first-k} finds the expected
solution.  All experiments in this section were carried out a Dell
PowerEdge R640 server with a Xeon Gold 6248 (2.5GHz) processor and
384GB of RAM. The operating system was Debian Linux 12.10 (amd64).
CPLEX was version 22.1.1 and Gurobi was version 12.0.1. We used SCIP
9.2.1 and PySCIPOpt 5.3.0. All results are the mean of 3 runs. We use
a 1 hour time limit for all runs.

\begin{table}[tbp]
 \caption{ ILP-feasibility problems.}
 \label{Table}
 \begin{tabularx}{\textwidth}{@{} c>{\hsize=0.5\hsize}C
>{\hsize=1.5\hsize}C
                              @{} }
 \toprule
     Name  & Symmetry group & Generators of the group \\
 \hline
P2 &  Primitive group (15,2) & $(1,15,7,5,12) (2,9,13,14,8) (3,6,10,11,4)$,  $ (1,4,5)(2,8,10)(3,12,15)(6,13,11)(7,9,14)$ \\
 \hline
P3 &   Primitive group (21,2) & $(1,7,12,16,19,21,6)(2,8,13,17,20,5,11)\cdot$ $(3,9,14,18,4,10,15)$,
 $(4,6,5)(9,11,10)(13,15,14)(16,18,17)(19,20,21)$  \\
 \hline

   P4 & Primitive group (45,1) & $( 1, 2, 7)( 3,11,27)( 4,14,31)( 5,18,32)( 6,20,36)\cdot$
                                 $( 8,24,39)(9,25,28)(10,26,42)(12,15,16)(13,30,40)\cdot$
                                 $(17,19,21)(22,35,44)(23,33,29)(34,43,37)\cdot$
                                 $(38,45,41)$,
                                 $(1,3,5,6,7,22,13,23)(2, 8, 9,10)\cdot$
                                 $(4,15,16,17,14,21,19,12)\cdot$
                                 $(11,28,29,38,44,25,20,37)\cdot$
                                 $(18,33,34,24,40,36,41,39)\cdot$
                                 $(26,43,35,32,42,45,27,30)$,
                                 $( 1, 4)( 3,12)( 5,19)( 6,21)( 7,14)( 8,10)(11,20)\cdot$
                                 $(13,16)(15,23)(17,22)(18,33)(24,41)(25,28)\cdot$
                                 $(26,43)(27,32)(29,44)(30,35)(34,39)(36,40)(42,45) $ \\
 \bottomrule
 \end{tabularx}
\end{table}

\begin{figure}[tbp]
\centering
\caption{Comparison of elapsed time for problem P2. Asterisks indicate timeout.}
\label{fig:P2}
\scriptsize
\begin{tikzpicture}
\begin{axis}[
    xlabel={Scale},
    ylabel={Elapsed Time (s)},
    xmin=1, xmax=20,
    ymin=0, ymax=3800,
    xtick={1,2,...,20},
    ytick={0,500,1000,2000,3000,3600},
    legend pos=north west,
    grid=major,
    thick,
    scatter/classes={a={mark=o},b={mark=asterisk}},
    width=0.85\textwidth,
    height=0.55\textwidth,
]

\addplot[color=blue, mark=square*]
coordinates {
(1,2.03)(2,4.08)(3,3.85)(4,5.60)(5,4.83)(6,4.01)(7,22.38)(8,10.87)(9,34.07)(10,33.71)
(11,46.66)(12,103.73)(13,48.11)(14,127.67)(15,66.69)(16,689.64)(17,1885.64)(18,1275.66)(19,725.15)(20,3475.37)
};
\addlegendentry{Algorithm 2}

\addplot[scatter,color=red,scatter src=explicit symbolic]
coordinates {
  (1,0.05) [a]
  (2,0.14) [a]
  (3,1.67) [a]
  (4,3.66) [a]
  (5,7.33) [a]
  (6,11.39) [a]
  (7,24.20) [a]
  (8,17.25) [a]
  (9,78.15) [a]
  (10,299.95) [a]
  (11,468.63) [a]
  (12,126.68) [a]
  (13,1166.74) [a]
  (14,1146.67) [a]
  (15,1097.42) [a]
  (16,1886.85) [a]
  (17,2610.94) [a]
  (18,3600) [b]
  (19,3600) [b]
  (20,3600) [b]
};
\addlegendentry{CPLEX}

\addplot[color=green!60!black, mark=triangle*]
coordinates {
(1,0.15)(2,0.17)(3,0.56)(4,3.65)(5,9.98)(6,16.30)(7,1.31)(8,34.60)(9,80.68)(10,72.56)
(11,35.55)(12,0.52)(13,310.69)(14,360.81)(15,298.17)(16,646.88)(17,776.52)(18,775.60)(19,1631.70)(20,2205.33)
};
\addlegendentry{Gurobi}

\addplot[color=purple, mark=diamond*]
coordinates {
(1,0.29)(2,0.64)(3,0.66)(4,1.61)(5,1.41)(6,7.07)(7,2.65)(8,7.56)(9,11.65)(10,13.19)
(11,47.36)(12,73.86)(13,16.13)(14,50.03)(15,129.70)(16,163.80)(17,160.27)(18,187.94)(19,264.65)(20,233.04)
};
\addlegendentry{SCIP}

\end{axis}
\end{tikzpicture}
\end{figure}

\begin{table}[tbp]
\centering
\caption{Solution times (seconds) and memory usage (kilobytes) for P2.}
\label{table:comparison-infeasible-problems-11-20}
\captionsetup{font=scriptsize} 
\scriptsize
\begin{tabular}{|c|r|r|r|r|r|}
\hline
\textbf{Solver} & \textbf{Scale} & \textbf{Elapsed Time (s)} & \textbf{Kernel (s)} & \textbf{User (s)} & \textbf{Memory (kB)} \\ \hline
alg2 & 11 & 46.66 & 0.26 & 45.56 & 93492\\ \hline
cplex & 11 & 468.63 & 0.01 & 468.56 & 25168\\ \hline
gurobi & 11 & 35.55 & 0.01 & 35.50 & 15728\\ \hline
scip & 11 & 47.36 & 0.12 & 47.23 & 44688\\ \hline
alg2 & 12 & 103.73 & 0.29 & 102.45 & 110489\\ \hline
cplex & 12 & 126.68 & 0.00 & 126.62 & 21583\\ \hline
gurobi & 12 & 0.52 & 0.00 & 0.48 & 15083\\ \hline
scip & 12 & 73.86 & 0.23 & 73.40 & 85620\\ \hline
alg2 & 13 & 48.11 & 0.28 & 46.93 & 99559\\ \hline
cplex & 13 & 1166.74 & 0.03 & 1166.61 & 47459\\ \hline
gurobi & 13 & 310.69 & 0.00 & 310.64 & 17308\\ \hline
scip & 13 & 16.13 & 0.14 & 16.00 & 49881\\ \hline
alg2 & 14 & 127.67 & 0.37 & 126.24 & 117356\\ \hline
cplex & 14 & 1146.67 & 0.03 & 1146.55 & 42931\\ \hline
gurobi & 14 & 360.81 & 0.01 & 360.75 & 19704\\ \hline
scip & 14 & 50.03 & 0.22 & 49.68 & 74865\\ \hline
alg2 & 15 & 66.69 & 0.31 & 65.50 & 104793\\ \hline
cplex & 15 & 1097.42 & 0.01 & 1097.34 & 21613\\ \hline
gurobi & 15 & 298.17 & 0.01 & 298.12 & 17077\\ \hline
scip & 15 & 129.70 & 0.40 & 128.77 & 155315\\ \hline
alg2 & 16 & 689.64 & 1.03 & 686.77 & 292491\\ \hline
cplex & 16 & 1886.85 & 0.05 & 1925.08 & 54880\\ \hline
gurobi & 16 & 646.88 & 0.02 & 646.81 & 21843\\ \hline
scip & 16 & 163.80 & 0.12 & 2.76 & 49144\\ \hline
alg2 & 17 & 1885.64 & 2.52 & 1879.69 & 743260\\ \hline
cplex & 17 & 2610.94 & 0.13 & 2610.65 & 104028\\ \hline
gurobi & 17 & 776.52 & 0.02 & 776.43 & 19444\\ \hline
scip & 17 & 160.27 & 0.51 & 159.07 & 128180\\ \hline
alg2 & 18 & 1275.66 & 1.34 & 1271.31 & 458885\\ \hline
gurobi & 18 & 775.60 & 0.01 & 775.51 & 20956\\ \hline
scip & 18 & 187.94 & 0.54 & 186.53 & 111533\\ \hline
alg2 & 19 & 725.15 & 1.18 & 722.11 & 446423\\ \hline
gurobi & 19 & 1631.70 & 0.01 & 1631.59 & 22445\\ \hline
scip & 19 & 264.65 & 0.64 & 262.91 & 179401\\ \hline
alg2 & 20 & 3475.37 & 4.22 & 3465.94 & 1312812\\ \hline
gurobi & 20 & 2205.33 & 0.05 & 2205.17 & 37880\\ \hline
scip & 20 & 233.04 & 0.77 & 231.14 & 197280\\ \hline
\end{tabular}
\end{table}

We first discuss problem P2.  As shown in Figure~\ref{fig:P2}, for
scales 1 to 10, all solvers complete in less than 5 minutes.
Table~\ref{table:comparison-infeasible-problems-11-20} shows a
comparison of time and memory use for scales 11 through 20. The
built-in symmetry handling of SCIP (not used by Algorithm 2) performs
well on these problems.  For scales 11-20 Algorithm 2 is generally
faster than CPLEX and Gurobi, and in particular completes all 20
scales within the 1 hour time limit, while CPLEX times out for scales 18,
19, and 20.

\begin{table}[tbp]
\centering
\caption{\normalsize Solution times and memory usage for P3.}
\label{tab:P3}
\scriptsize
\begin{tabular}{|c|r|r|r|r|r|}
\hline
\textbf{Solver} & \textbf{Scale} & \textbf{Elapsed Time (s)} & \textbf{Kernel (s)} & \textbf{User (s)} & \textbf{Memory (kB)} \\ \hline
alg2   & 1 & 9.24     & 0.17  & 9.17     & 74957  \\ \hline
cplex  & 1 & 0.78     & 0.003 & 0.77     & 16981  \\ \hline
scip   & 1 & 2.00     & 0.11  & 1.99     & 28257  \\ \hline
gurobi & 1 & 5.96     & 0.007 & 5.95     & 15997  \\ \hline
alg2   & 2 & 149.42   & 0.29  & 149.22   & 117337 \\ \hline
cplex  & 2 & 20.95    & 0.000 & 20.95    & 17921  \\ \hline
scip   & 2 & 5.63     & 0.11  & 5.63     & 29745  \\ \hline
gurobi & 2 & 91.47    & 0.003 & 91.46    & 16329  \\ \hline
alg2   & 3 & 363.08   & 0.377 & 362.77   & 164063 \\ \hline
cplex  & 3 & 108.43   & 0.010 & 108.41   & 21687  \\ \hline
scip   & 3 & 191.34   & 0.31  & 191.13   & 78953  \\ \hline
gurobi & 3 & 1075.75  & 0.017 & 1075.72  & 23979  \\ \hline
alg2   & 4 & 1074.83  & 0.63  & 1074.26  & 219821 \\ \hline
cplex  & 4 & TO       & TO    & TO       & -      \\ \hline
scip   & 4 & TO       & TO    & TO       & -      \\ \hline
gurobi & 4 & 2342.67  & 0.033 & 2342.59  & 43643  \\ \hline
alg2   & 5 & 2036.46  & 1.357 & 2035.11  & 369835 \\ \hline
cplex  & 5 & TO       & TO    & TO        & -      \\ \hline
scip   & 5 & TO        & TO     & TO        & -      \\ \hline
gurobi & 5 & 4350.08  & 0.06  & 4349.00  & 75039  \\ \hline
\end{tabular}

\end{table}

Table~\ref{tab:P3} presents the performance of Algorithm 2, CPLEX,
SCIP, and Gurobi on problem P3 for scales
 1 to 5. The solvers did not solve the problem for scales
6 to 10 in less than 1 hour.  Algorithm 2 and Gurobi are the only
solvers that complete for 5 scales --- two more
than CPLEX and SCIP, which solve instances only up to scale 3 before
exceeding the 1 hour time limit.

\begin{table}[tbp]
\centering
\caption{Solution times and memory usage for P4.}
\label{tab:P4}
\scriptsize
\begin{tabular}{|c|r|r|r|r|r|}
\hline
\textbf{Solver} & \textbf{Scale } & \textbf{Elapsed Time (s)} & \textbf{Kernel (s)} & \textbf{User (s)} & \textbf{Memory (kB)} \\ \hline
alg2   & 1 & 81.54   & 0.31  & 81.23   & 94292  \\ \hline
cplex  & 1 & 2.97    & 0.003 & 2.94    & 18261  \\ \hline
scip   & 1 & 7.25    & 0.11  & 7.21    & 41339  \\ \hline
gurobi & 1 & 6.64    & 0.00  & 6.61    & 16517  \\ \hline
alg2   & 2 & 1081.36 & 0.64  & 1080.66 & 209033 \\ \hline
cplex  & 2 & 103.02  & 0.007 & 103.01  & 24896  \\ \hline
scip   & 2 & 65.00   & 0.29  & 64.81   & 173423 \\ \hline
gurobi & 2 & 751.74  & 0.02  & 751.71  & 19220  \\ \hline
alg2   & 3 & TO      & TO    & TO      & -      \\ \hline
cplex  & 3 & 1813.18 & 0.04  & 1813.06 & 44367  \\ \hline
scip   & 3 & TO      & TO    & TO      & -      \\ \hline
gurobi & 3 & TO      & TO    & TO      & -      \\ \hline
\end{tabular}

\end{table}

Table~\ref{tab:P4} compares the performance of Algorithm 2, CPLEX,
SCIP, and Gurobi on the infeasible ILP problem P4 for scales 1 to
3. All solvers complete for scales 1 and 2. At scale 3, only CPLEX
completes, while the others time out. For scales 4 to 10, all solvers
time out.  Among the four, Gurobi demonstrates the best performance at
smaller scales in terms of both runtime and memory usage. SCIP also
shows strong performance on scale 2 but with higher memory
usage. These instances seem challenging for all tested solvers.

\section{Conclusions}

In this paper we introduced some new techniques for solving symmetric
linear programs based on deriving nonlinear constraints from the
symmetry group of the formulation. Since these constraints depend only
on the symmetry group, the same constraints can be re-used for many
problems. The practical benefits of using a nonlinear solver in order
have a smaller search space need more evaluation, but at least on the
artificial instances in Section~\ref{experiments} they show some
promise, allowing the solution of instances not solvable within 1
hour on the same hardware using commercial MILP solvers. We think our
methods may be useful for problems with a large enough cyclic subgroup
in their symmetry group, particularly those where determining integer
feasibility is a challenge.  From a theoretical point of view, these
techniques show that core point techniques are not limited to groups
where the number of (non-equivalent) core-points is finite.

\section*{Acknowledgements}
The authors wish to thank two anonymous referees for many helpful
suggestions, including the simplified proof of Lemma~\ref{T0} that is
given here. They would also like to thank Barry Monson, Branimir Ćaćic
and Nicholas Touikan for helpful feedback on earlier versions of the
paper.

\bibliographystyle{spmpsci}
\bibliography{references}

\end{document}